\newtheorem{tm}{Theorem}[section]
\newtheorem{rk}{Remark}[section]
\newtheorem{prop}{Proposition}[section]
\newtheorem{lm}{Lemma}[section]
\newtheorem{ex}{Example}[section]
\newcommand{\E}{\mathbb E}
\newcommand{\bi}{\mathbf i}
\newcommand{\<}{\langle}
\renewcommand{\>}{\rangle}
\newcommand{\TheTitle}{Wasserstein Hamiltonian Flow  with common noise on graph} 
\newcommand{\TheAuthors}{Jianbo, Cui and Shu, Liu and Haomin, Zhou}
\title{{\TheTitle}\thanks{}}
\title{{\TheTitle}\thanks{The research is partially supported by Georgia Tech Mathematics Application Portal (GT-MAP) and by research grants NSF  DMS-1830225, and ONR N00014-21-1-2891. The research of the first author is
partially supported by start-up funds (P0039016) from Hong Kong Polytechnic University and the CAS AMSS-PolyU Joint Laboratory of Applied Mathematics.
}}
\author{Jianbo Cui 
\thanks{Department of Applied Mathematics, The Hong Kong Polytechnic University, Hung Hom, Kowloon, Hong Kong 
(\email{jianbo.cui@polyu.edu.hk}, corresponding author)
}
\and 
Shu Liu
\thanks{School of Mathematics, Georgia Institute of Technology, Atlanta, GA 30332, USA
(\email{sliu459@gatech.edu})}
\and 
Haomin Zhou
\thanks{School of Mathematics, Georgia Institute of Technology, Atlanta, GA 30332, USA
(\email{hmzhou@gatech.edu})}
}
\begin{document}

\maketitle

\begin{abstract}
We study the Wasserstein Hamiltonian flow with a common noise on the density manifold of a finite graph. Under the framework of stochastic variational principle, we first develop the formulation of stochastic Wasserstein Hamiltonian flow and show the local existence of a unique solution. We also establish a sufficient condition for the global existence of the solution. Consequently, we obtain the global well-posedness for the nonlinear Schr\"odinger equations with common noise on graph. In addition, using Wong-Zakai approximation of common noise, we prove the existence of the minimizer for an optimal control problem with common noise. We show that its minimizer satisfies the stochastic Wasserstein Hamiltonian flow on graph as well. 
\end{abstract}

\begin{keywords}
stochastic Hamiltonian flow on graph, density manifold, Wong--Zakai approximation, optimal transport. 
\end{keywords}

\begin{AMS}
58B20,58J65,35Q41,49Q20. 
\end{AMS}

\section{Introduction}

The Wasserstein Hamiltonian flow defined on the cotangent bundle of probability density manifold, also known as Wasserstein manifold in the literature, has been studied in the past few years (see, e.g., \cite{MR924776,MR2361303,MR2808856,CP17}). Its relationship with the Hamiltonian ordinary differential equations (ODEs) has also been well demonstrated via optimal transport theory (see, e.g., \cite{Vil09,CLZ19,CLZ20}). Furthermore, it has been used in the theoretical or numerical analysis of nonlinear Schr\"odinger equation (see, e.g., \cite{Nelson19661079,MR783254,MR870196,CLZ19,CLZ20a}), mass optimal transport (see, e.g., \cite{BB00,GLM19,CLZ20a,CLZ21}) and the Schr\"odinger bridge problem (see, e.g., \cite{Leo14,leger2019hopfcole,CLMZ21,CLZ21a}). Extending Wasserstein Hamiltonian flow to account for random perturbations is challenging, because not all types of noise can be used to perturb the dynamics on density manifold in which the non-negativity of probability density function and mass conservation must be preserved. Recently, using the concept of common noise, also referred as the environment or system noise \cite{MR3572323,MR3753660}, a stochastic version of Wasserstein Hamiltonian flow is introduced to understand the collective dynamical behavior on density manifold of the stochastic Hamiltonian ODE defined on continuous phase space \cite{CLZ21s}.
However, little is known if the underlying space becomes discrete, such as a finite graph or a spatial discretization of a continuous space, due to several significant challenges that arise in the discrete space. 

Unlike the continuous space, whereas stochastic Hamiltonian ODEs can be identified and interpreted as the particle dynamics corresponding to the stochastic Wasserstein Hamiltonian flow, such a particle correspondence has not been established in the discrete space, which prevents adopting many well-developed techniques to the discrete case. For example, the particle version of stochastic Hamiltonian ODEs has been used as a push-forward map, a crucial tool in the analysis, to study the dynamical properties on the density manifold \cite{CLZ20}.  
This tool is hard to be generalized to a general graph partially because not all graph can be embedded into a continuous space \cite{CLZ21a}. Due to the loss of particle formulation, it is still unclear what kind of noise or random perturbation on finite graph can be used as a functional replacement of the white noise in the continuous space. In addition, low regularity of noise and the discrete structure of the graph make it harder to analyze the dynamical properties of Hamiltonian system on graph.   

In this paper, we propose two different strategies to establish the Wasserstein Hamiltonian flow with common noise on finite graph and investigate their mathematical properties. The first approach is based on the discrete version of generalized stochastic variational principle, which provides a formulation to construct stochastic Wasserstein Hamiltonian flow with given initial values. We use the stopping time technique to show its local well-posedness. Using Poisson bracket, we provide a sufficient condition on the energy terms in the variational principle to ensure the global well-posedness for the resulting system. We further demonstrate that both nonlinear Schr\"odinger equation and logarithmic Schr\"odinger equations with common noise on graph satisfy this sufficient condition. Thus they possess global solutions uniquely. In this consideration, it is observed that the Fisher information plays a fundamental role in obtaining the global existence result.

The second approach to derive the boundary value formulation of Wasserstein Hamiltonian flow with common noise on graph is proposed in the framework of stochastic optimal control. Using Wong--Zakai approximation \cite{MR195142,MR3712946} of common noise and von Neumann's minimax theorem \cite{MR1512442}, we prove the existence of minimizer for the stochastic optimal control problems. Under suitable assumptions, we show that their critical point satisfies Wasserstein Hamiltonian flow with common noise on the graph. In addition, the system obtained by the stochastic optimal control approach exhibits highly consistent formulation as those constructed by using stochastic variational principle. Yet, they have interesting differences, especially when the local well-posedness for the later one is no longer valid. In our investigation, these two strategies are complementary to each other in exploring the properties of stochastic Wasserstein Hamiltonian flow on graph.

The organization of this paper is as follows. In Section 2, we discuss what the common noise is and why it is used in our study. In Section 3, we review the basic notations    
of the deterministic Wasserstein Hamiltonian flow on a finite graph and the discrete optimal transport theory. In section 4, we present the discrete generalized stochastic variational principle to derive the stochastic Wasserstein Hamiltonian flow on graph and study several properties of the stochastic Wasserstein Hamiltonian flow.  In section 5, we give an alternative way based on stochastic optimal control to derive the stochastic Wasserstein Hamiltonian flow on graph. Meanwhile, we show the existence of the minimizer and derive its equation using Wong-Zakai approximation. 

\section{Common noise}
In this section, we borrow some examples to explain what common noise is and why it is a good choice for us to consider here. 

The first example is a mean-field game model (see, e.g. \cite{MR3572323}). Consider a $N$-player differential game, the state of each player $X_i(t)$ is a stochastic process described by a stochastic differential equation (SDE)
\begin{align*}
    dX_i(t)=b(t,X_i(t),\mu(t),\alpha_i)dt +\sigma(t,X_i(t),\mu(t))dB_i(t)+\sigma_0(t,X_i(t),\mu(t))dW(t),
\end{align*}
where $b, \sigma, \sigma_0$ are given functions, $\alpha_i$ is a control variable, $\mu(t)=\frac 1{N} \sum_{j=1}^N \delta_{X_i(t)}$, and $B_i (i=1 ,\cdots, N)$ and $W$ are one-dimensional independent Brownian motion defined on a completed probability space $(\Omega,\mathbb P,\mathcal F)$. In this model, the Brownian motion $B_i$ is called the idiosyncratic noise, which is introduced to model random perturbations to each individual, while $W$ is a stochastic perturbation independent of individuals and it is used to model the common disturbance to all players, hence it is called common noise. When $N\to\infty$, $\mu$ tends to a random measure reflecting the aggregate behavior of all players. $\mu$ is independent of $B_i$ while depending on the common noise $W$, because the effect from $B_i$ is averaged out but not for $W$. In this sense, $\mu(t)$ is a random measure flow perturbed by the common noise $W$. Conditioned on $W$, the model recovers the standard mean field game formulation (see the pioneering works \cite{MR2295621,MR2346927}). 

The second example is the stochastic nonlinear Schr\"odinger equation emerged from nonlinear optics, hydrodynamics, and plasma physics. For instance, in the molecular monolayers arranged in Scheibe aggregates \cite{PhysRevE.49.4627,PhysRevE.63.025601}, the thermal fluctuations of the phonons are included, which results in a stochastic nonlinear dynamical model given by
\begin{align*}
du=\bi \Delta u dt+\lambda \bi |u|^2u dt+\bi u\circ dW_t.   
\end{align*}
Here $\lambda\in \mathbb R$ is a constant, $W$ is a Wiener process on an infinite dimensional space,
and $\circ$ means that the stochastic integral is taken in the Stratonovich sense. The numerical experiments based on this stochastic model coincide with results reported in \cite{Chem} when temperatures are lower than $3K$.

Another model, called nonlinear  Schr\"odinger equation with random dispersion 
\begin{align*}
du=\bi \Delta u\circ dW_t+\bi \lambda |u|^2u,    
\end{align*}
is proposed to describe the propagation of signal (see e.g. \cite{Agra01b}), in which $W$ is a standard one-dimensional Brownian motion. In a recent study \cite{CLZ21s}, by using the Madelung transformation $u=\sqrt{\rho}e^{\bi S}$ and the stochastic variational principle on the density manifold, it is found that the mathematically equivalent systems in terms of $\rho$, $S$, and $W$ for above two stochastic nonlinear Schr\"odinger equations can be established. Under this viewpoint, $W$ is a random noise acting on the density function $\rho$. Therefore, it is a common noise, because it perturbs the entire density, not an individual particle.  In the mean-field game model and nonlinear Schr\"odinger equations, both $\mu$ and $\rho$ remain to be probability density functions, despite of the perturbations by the common noise $W$. In other words, non-negativity as well as mass can be preserved under common noise perturbations. Inspired by those examples, we select common noise to establish the stochastic Wasserstein Hamiltonian flow on graphs.

\section{Discrete optimal transport and discrete Wasserstein Hamiltonian flow}
In this section, we introduce the notations and some known results for the discrete optimal transport problem and Wasserstein Hamiltonian flow \cite{CLZ19,CLZ20a}. 

Consider a graph $G=(V,E,\omega)$ with a node set $V=\{a_i\}_{i=1}^N$, an edge set $E$, and $\omega_{jl}$ are the weights of the edges: $\omega_{jl}=\omega_{lj}>0$, if there is an edge between $a_j$ and $a_l$, and $0$ otherwise. Below, we will write $(i,j)\in E$ to denote the edge in $E$ between the vertices $a_i$ and $a_j$. Throughout the paper, we assume that $G$ is an undirected, connected graph with no self loops or multiple edges.

Let us denote the set of discrete probabilities on the graph by ${\mathcal{P}}(G)$:
$$\mathcal P(G)=\{(\rho)_{j=1}^N\ :\, \sum_{j}\rho_j =1, \rho_j\ge 0,\; \text{for} \; j\in V\},$$ 
and let $\mathcal P_o(G)$ be its interior (i.e., all  $\rho_j> 0$, for $a_j\in V$).
Let $\mathbb V_j$ be a linear potential on each node $a_j$, and $\mathbb W_{jl}=\mathbb W_{lj}$ an 
interactive potential between nodes $a_j,a_l$. The total linear potential $\mathcal V$ and interaction potential $\mathcal W$ are given by
$$\mathcal V(\rho)=\sum_{i=1}^N\mathbb V_i\rho_i,\,\,
\mathcal W(\rho)=\frac 12\sum_{i,j}\mathbb W_{ij}\rho_i\rho_j.$$ 

We let $N(i)=\{a_j\in V: (i,j)\in E\}$ be the adjacency set of node $a_i$ and 
$\theta_{ij}(\rho)$ be the density dependent weight on the edge $(i,j)\in E$. Consider the probability weight $\theta$ which is defined by $\theta_{ij}(\rho)=\Theta(\rho_i,\rho_j)$ with a continuous differentiable function $\Theta:[0,\infty)\times [0,\infty)\to [0,\infty)$ satisfying 
\begin{align*}
&\Theta\in \mathcal C^{\infty}((0,\infty)\times (0,\infty));\\
& \Theta \; \text{is continuous on}\; [0,\infty)\times [0,\infty);\\
&\Theta(s,t)=\Theta(t,s); \; \Theta(s,t)>0, \;\text{if}\; s,t>0,\\
& \min(s,t) \le \Theta(s,t)\le \max(s,t), s,t\ge 0.\\
& \Theta \; \text{is concave on} \; (0,\infty)\times (0,\infty).
\end{align*}
Two typical examples are the average function $\theta^A_{ij}(\rho_i,\rho_j)=\frac {\rho_i+\rho_j}2$ and the logarithmic mean $\theta^{L}_{ij}(\rho_i,\rho_j)=\frac {\log(\rho_i)-\log(\rho_j)}{\rho_i-\rho_j}.$ For more choices of the probability weight functions, we refer to \cite{CHLZ12,Mas11,CLMZ21,CLZ20a}.

Define the discrete Lagrange functional on the graph by 
\begin{equation}\label{DiscLag}
\mathcal L (\rho,v)=\int_0^1 \bigl[
\frac 12\<v,v\>_{\theta(\rho)}-\mathcal V(\rho)-\mathcal W(\rho)+\alpha L(\rho)-\beta I(\rho)\bigr] dt,
\end{equation}
where: $\rho(\cdot)\in \mathcal P_o(G)$,   the vector field $v$ is a skew-symmetric matrix on $E$. The inner product of two vector fields  $u,v$ is defined by 
$$\<u,v\>_{\theta(\rho)}:=\frac 12\sum_{(j,l)\in E}u_{jl}v_{jl}\theta_{jl}(\rho)\omega_{ij}.$$ 
The parameter $\beta\ge 0$,
the {\emph{discrete Fisher information}} \cite{CLZ19,CLZ21s} is defined by 
\begin{equation}\label{DiscFisher}
I(\rho)=\frac 12\sum_{i=1}^N\sum_{j\in N(i)}\widetilde \omega_{ij}|\log(\rho_i)-\log(\rho_j)|^2\widetilde \theta_{ij}(\rho),
\end{equation}
and the {\emph{discrete entropy}} is 
\begin{align*}
L(\rho)=\sum_{i=1}^N(\log(\rho_i)\rho_i-\rho_i),
\end{align*}
where $\alpha\in\mathbb R$, $\widetilde \omega,$ $\widetilde \theta$ can be another pair of weight  and
density dependent weight on $G$.
For convenience, we consider a typical case that  $\theta_{ij}(\rho)=\frac {\rho_i+\rho_j}2,\widetilde \theta_{ij}(\rho)=\frac {\rho_i-\rho_j}{\log(\rho_i)-\log(\rho_j)}$ in this paper.

The overall goal of discrete variational problem is to find the minimizer of $\mathcal L(\rho,v)$ subject to the discrete continuity equation on graph
\begin{align*}
\frac {d\rho_i}{d t}+div_G^{\theta}(\rho v)=0,
\end{align*}
where the discrete divergence of the flux function $\rho v$  is defined as 
$$div_G^{\theta}(\rho v):=-(\sum_{l\in N(j)}\sqrt{\omega_{jl}}v_{jl}\theta_{jl}).$$
As shown in \cite{CLZ20a}, the critical point $(\rho, v)$ of $\mathcal L$ satisfies 
$v=\nabla_G S:=\sqrt{\omega_{jl}}(S_j-S_l)_{(j,l)\in E}$ for some function $S$ defined on $G$. As a consequence,
the minimization problem leads to 
the following discrete  Wasserstein-Hamiltonian vector field on the graph $G$:
\begin{equation}\label{dhs}\begin{split}
&\frac {d\rho_i}{d t}+\sum_{j\in N(i)}\omega_{ij}(S_j-S_i)\theta_{ij}(\rho)=0,\\
&\frac {d S_i}{dt}+\frac 12\sum_{j\in N(i)}\omega_{ij}(S_i-S_j)^2 \frac {\partial \theta_{ij}(\rho)}{\partial \rho_i}+\beta \frac {\partial I(\rho)}{\partial \rho_i}-\alpha \log(\rho_i) +\mathbb V_i+\sum_{j=1}^N\mathbb W_{ij}\rho_j=0.
\end{split}\end{equation}
With respect to the variables $\rho$ and $S$, we can rewrite \eqref{dhs} as a Hamiltonian system with
Hamiltonian function 
$\mathcal H(\rho,S)=\mathcal K(S,\rho)+\mathcal F(\rho),$ where 
$ \mathcal K(S,\rho):=
\frac 12 \<\nabla_G S, \nabla_G S\>_{\theta(\rho)}$ and $\mathcal F(\rho):=\beta I(\rho)+\mathcal V(\rho)+\mathcal W(\rho).$ 
In particular, if $\beta=0,$ $\mathcal V=0,$ and $\mathcal W=0$, the infimum of 
$2\mathcal L(\rho,v)$ induces the Wasserstein metric on a finite graph, which is a discrete version of  Benamou-Brenier formula \cite{MR3834701}:
\begin{align*}
W(\rho^0,\rho^1):=\inf_{v}\Big\{\sqrt{\int_{0}^1 \<v,v\>_{\theta(\rho)}}dt \,\ : \,
\frac{d\rho}{dt}+div_G^{\theta}(\rho v)=0, \; \rho(0)=\rho^0,\; \rho(1)=\rho^1\Big\}.
\end{align*} 

\section{Discrete Wasserstein Hamiltonian flow with common noise}

In this section, we first use the discrete version of generalized stochastic variational principle in \cite{CLZ21s} to derive the discrete Wasserstein Hamiltonian flow with common noise. Then we study both the local and global existence of the unique solution for the stochastic Wasserstein Hamiltonian flow on graph.

Let us briefly introduce the generalized stochastic variational principle or Hamiltonian principle as follows. Define $W_{\delta}$ the linear Wong--Zakai approximation \cite{MR195142} of a standard Wiener process $W$, i.e. $W_{\delta}(t)=W_{\delta}(t_k)+\frac {t-t_k}{\delta}(W_{\delta}(t_{k+1})-W_{\delta}(t_k))$ for $t\in [t_k,t_{k+1})$ with $t_k=k\delta$, on a complete filtered probability space $(\Omega,\mathbb P,(\mathcal F)_{t\ge 0},\mathcal F)$.  Define the dominated energy and perturbed energy  as
\begin{align*}
&\mathcal H_0(\rho,S)=\mathcal K(S,\rho)+\mathcal F(\rho)-\alpha L(\rho),\;\\
&\mathcal H_1(\rho,S)=\eta_1 \mathcal K(S,\rho)+\eta_2 I(\rho)+\eta_3\mathcal V(\rho)+\eta_4\mathcal W(\rho)-\eta_5 L(\rho) 
\end{align*}
with different noise intensities $\eta_i\in \mathbb R, i=1,\cdots,5.$ We would like to remark that by taking different values for the noise intensities, the above general form covers many well-known problems, such as the stochastic optimal transport on graph, the stochastic Schr\"odinger equation, and Schr\"odinger equation with white noise on graph. Consider the following stochastic variational principle with Wong--Zakai approximation $W_{\delta},$
\begin{align}\label{gen-var-pri}
   \mathcal I(\rho^0,\rho^T)=\inf\{\mathcal S(\rho_t,\Phi_t)| (-\Delta_{\rho_t})^{\dagger}\Phi_t \in \mathcal T_{\rho_t} \mathcal P_{o}(G),\rho(0)=\rho^0, \rho(T)=\rho^T\}
\end{align}

whose action functional is given by the dual coordinates, 
\begin{align*}
\mathcal S(\rho_t,\Phi_t)&=\<\rho(0),\Phi(0)\>-\<\rho(T),\Phi(T)\>+\int_0^T \<\partial_t \Phi(t),\rho_t\>+\mathcal H_0(\rho_t, \Phi_t) dt  \\
&+\int_0^T \mathcal H_1(\rho_t,\Phi_t)\dot W_{\delta}dt.
\end{align*}
Here $\<\cdot,\cdot\>$ is the standard inner product in $\mathbb R^N$, and $(-\Delta_{\rho})^\dagger$ is the pseudo inverse of $div_G^{\theta}(\rho \nabla(\cdot))$, and $\mathcal T_{\rho} \mathcal P_{o}(G)$ is the tangent space at $\rho\in \mathcal P_{o}(G)$.  In particular, when $\eta_1=0,$ the above generalized 
Hamiltonian principle becomes the classical variational problem with random potential in Lagrangian formalism. 

By using Lagrange multiplier method,  one may verify that the critical point of \eqref{gen-var-pri} satisfies the following discrete stochastic Wasserstein Hamiltonian flow,
\begin{align*}
&\frac {d\rho}{d t}=\frac {\partial}{\partial S}\mathcal H_0(\rho,S)+\frac {\partial}{\partial S}\mathcal H_1(\rho,S)dW_{\delta}(t),\\
&\frac {d S}{dt}=-\frac {\partial}{\partial \rho}\mathcal H_0(\rho,S)-\frac {\partial}{\partial \rho}\mathcal H_1(\rho,S) dW_{\delta}(t).
\end{align*}
Moreover, if $\rho^0,\rho^T$ are $\mathcal F_0$ and $\mathcal F_T$ measurable functions and $\mathcal H_0,\mathcal H_1$ satisfies some growth conditions as those given in \cite{CLZ21s},  the limit of the above Wasserstein Hamiltonian flow with Wong--Zakai approximation converges to the stochastic Hamiltonian flow in Stratonovich sense.
\begin{align}\label{sdhs}
&\frac {d\rho}{d t}=\frac {\partial}{\partial S}\mathcal H_0(\rho,S)+\frac {\partial}{\partial S}\mathcal H_1(\rho,S)\circ dW(t),\\\nonumber
&\frac {d S}{dt}=-\frac {\partial}{\partial \rho}\mathcal H_0(\rho,S)-\frac {\partial}{\partial \rho}\mathcal H_1(\rho,S)\circ dW(t).
\end{align}
However, we would like to remark that it is difficult to rigorously show that \eqref{sdhs} is the critical point of \eqref{gen-var-pri} when $\delta \to 0.$

\subsection{Properties of Wasserstein Hamiltonian flow with common noise}

In this part, we consider the initial value problem of Eq. \eqref{sdhs} with $\rho(0)\in \mathcal P_{o}(G)$ which is $\mathcal F_{0}$--measurable. 
Let us first consider the local well-posedness of  Eq. \eqref{sdhs}.
For simplicity, we only present the detailed proof when 
$\theta_{ij}(\rho)=\frac {\rho_i+\rho_j}2,\widetilde \theta_{ij}(\rho)=\frac {\rho_i-\rho_j}{\log(\rho_i)-\log(\rho_j)}$ since the proof for the general case is analogous. 

\begin{prop}\label{local-well}
Let  $\rho(0)\in \mathcal P_{o}(G)$ and $S(0)\in \mathbb R^N$ be $\mathcal F_{0}$--measurable. Then there exists a stopping time $\tau^*(\rho(0),S(0))>0$ such that either
\begin{align*}
\tau^*(\rho(0),S(0))=+\infty, \; \text{or} \; \lim_{t\to \tau^*} \min_{i=1}^N\rho_i(t)=0\; \text{or} \; \lim_{t\to\tau^*} S(t)=\infty,  \;\text{a.s.}\;
\end{align*}
\end{prop}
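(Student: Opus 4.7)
The plan is to view Eq. \eqref{sdhs} as a Stratonovich SDE on the open state space
$$\mathcal D = \mathcal P_o(G)\times \mathbb R^N$$
and exploit the fact that all its coefficients are smooth on $\mathcal D$ but become singular only on the boundary where some $\rho_i$ vanishes or $|S|$ blows up. First I would identify the vector fields explicitly. Since $\theta_{ij}(\rho)=(\rho_i+\rho_j)/2$, the kinetic term $\mathcal K(\rho,S)$ is a polynomial in $\rho$ and quadratic in $S$, and $\mathcal V,\mathcal W$ are polynomials in $\rho$. For the Fisher information, the choice $\widetilde\theta_{ij}(\rho)=(\rho_i-\rho_j)/(\log\rho_i-\log\rho_j)$ yields the simplification
$$I(\rho)=\frac12\sum_{i}\sum_{j\in N(i)}\widetilde\omega_{ij}(\log\rho_i-\log\rho_j)(\rho_i-\rho_j),$$
which together with $L(\rho)=\sum_i(\rho_i\log\rho_i-\rho_i)$ is $\mathcal C^\infty$ on $(0,\infty)^N$. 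Hence both drift fields $\partial_S\mathcal H_0, \partial_\rho\mathcal H_0$ and both Stratonovich diffusion fields $\partial_S\mathcal H_1, \partial_\rho\mathcal H_1$ are $\mathcal C^\infty$ on $\mathcal D$ and in particular locally Lipschitz on every compact subset of $\mathcal D$.

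Next I would set up a localisation. For $n\in\mathbb N$ large, define the open set
$$\mathcal D_n=\{(\rho,S)\in\mathcal D:\min_i\rho_i>1/n,\ |S|<n\},$$
and choose a smooth cutoff $\chi_n:\mathcal D\to[0,1]$ with $\chi_n\equiv 1$ on $\mathcal D_n$ and supported in $\mathcal D_{2n}$. Multiplying every coefficient in \eqref{sdhs} by $\chi_n$ yields a Stratonovich SDE with globally bounded, Lipschitz coefficients (after conversion to It\^o form, the correction term inherits the same smoothness and cutoff), hence admitting a unique strong solution $(\rho^{(n)},S^{(n)})$ for any $\mathcal F_0$-measurable initial datum $(\rho(0),S(0))$; the case of general $(\rho(0),S(0))\in\mathcal D$ is reduced to the bounded case by the standard localisation on the $\mathcal F_0$-events $\{\min_i\rho_i(0)>1/k,\ |S(0)|<k\}$.

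Then I would define the stopping time
$$\tau_n=\inf\{t\ge 0:(\rho^{(n)}(t),S^{(n)}(t))\notin\mathcal D_n\}$$
and use pathwise uniqueness of the truncated SDE on the random interval $[0,\tau_n\wedge\tau_m]$ to deduce that $(\rho^{(n)},S^{(n)})\equiv(\rho^{(m)},S^{(m)})$ on $[0,\tau_n\wedge\tau_m]$ for $m\ge n$. Consequently $\tau_n$ is nondecreasing, its limit $\tau^*=\lim_{n\to\infty}\tau_n$ is a stopping time, and the patched process $(\rho,S)(t):=(\rho^{(n)},S^{(n)})(t)$ for $t<\tau_n$ is the unique strong solution of \eqref{sdhs} on $[0,\tau^*)$, taking values in $\mathcal D$. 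On the event $\{\tau^*<\infty\}$, the solution exits every $\mathcal D_n$ before time $\tau^*$, which forces $\min_i\rho_i(t)\to 0$ or $|S(t)|\to\infty$ as $t\uparrow\tau^*$, giving the claimed trichotomy.

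The main obstacle I expect is purely technical: making sure that all the logarithmic pieces appearing in $\partial_\rho\mathcal H_0$ and $\partial_\rho\mathcal H_1$ (coming from $\partial_\rho I$ and $\partial_\rho L$) are genuinely smooth on $(0,\infty)^N$, so that the cutoff argument above produces coefficients to which classical strong existence and uniqueness theory applies; in particular one must check that the factor $(\log\rho_i-\log\rho_j)\widetilde\theta_{ij}(\rho)=\rho_i-\rho_j$ removes the apparent singularity on the diagonal, and that the remaining $\log\rho_i$ and $1/\rho_i$ terms, while unbounded as $\rho_i\downarrow 0$, are smooth and bounded on each $\mathcal D_n$. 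Once this smoothness is verified, the rest is a standard maximal-solution construction.
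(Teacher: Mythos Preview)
Your proposal is correct and follows essentially the same localisation strategy as the paper: truncate the coefficients via smooth cutoffs to obtain globally Lipschitz SDEs, invoke standard strong existence and uniqueness, and patch the solutions via consistency at an increasing sequence of stopping times. The only cosmetic difference is that the paper uses path-dependent cutoffs $\phi_c^1(S,t)=\theta_c^1(\|S\|_{\mathcal C([0,t];\mathbb R^N)})$ and $\phi_c^2(\rho,t)=\theta_c^2(\min_i\min_{s\le t}\rho_i(s))$ based on running extrema, whereas you use state-dependent cutoffs $\chi_n(\rho,S)$; both routes yield the same maximal solution and blow-up alternative.
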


\begin{proof}

Let $c>1.$
Denote smooth truncation functions $\theta^1,\theta^2$ such that 
\begin{align*}
&\theta^1_c(x):=1, \, x\in [0,c],\; \theta^1_c(x)=0, x\in [2c,\infty),\\
&\theta^2_c(x):=1, \; x\in [1/c,1],\; \theta^2_c(x)=0, x\in [0,1/{2c}].
\end{align*} 
The support of $\theta_c^1$ is chosen as $[0,2c]$ and that of $\theta_c^2$ is $[\frac 1{2c},1].$ 
Define $\phi_c^1(S,t),\phi_c^2(\rho,t)$ by 
\begin{align*}
\phi_c^1(S,t)=\theta^1_c(\|S\|_{\mathcal C([0,t]; \mathbb R^N)}), \;\;
\phi_c^2(\rho,t)=\theta^2_c\Big(\min_{i=1}^N \min_{s\in [0,t]}\rho_i(s)\Big),
\end{align*}
Notice that  
\begin{align*}
\frac {\partial \mathcal H_0}{\partial S_i}&=\sum_{j\in N(i)}(S_i-S_j)\frac {\rho_i+\rho_j}2,\\
\frac {\partial \mathcal H_0}{\partial \rho_i}&=\frac 14\sum_{j\in N(i)}(S_i-S_j)^2+\frac {\partial \mathcal F}{\partial \rho_i}-\alpha \log(\rho_i),\\
\frac {\partial \mathcal H_1}{\partial S_i}&= {\eta_1}\sum_{j\in N(i)}(S_i-S_j)\frac {\rho_i+\rho_j}2,\\
\frac {\partial \mathcal H_1}{\partial \rho_i}&=\frac {\eta_1}4\sum_{j\in N(i)}(S_i-S_j)^2+\eta_2\sum_{j\in N(i)}  \Big(\log(\rho_i)-\log(\rho_j)+\frac {\rho_i-\rho_j}{\rho_i}\Big)\\
&\quad+\eta_3 \mathbb V_i+\eta_4\sum_{j=1}^N \mathbb W_{ij}\rho_j-\alpha \frac 1{\rho_i}.
\end{align*}
Due to the relationship between It\^o integral and Stratonovich integral,
we consider the following truncated equation with $c>0$ large enough,
\begin{align}\label{trun-whf-ito}
\frac {d\rho^{c}}{d t}&=\phi_c^1(S^c,t) \phi_c^2(\rho^c,t) \frac {\partial}{\partial S}\mathcal H_0(\rho^c,S^c) dt+\phi_c^1(S^c,t) \phi_c^2(\rho^c,t) \frac {\partial}{\partial S}\mathcal H_1(\rho^c,S^c) dW_t\\\nonumber
&\quad-\frac 12\phi_c^1(S^c,t) \phi_c^2(\rho^c,t) \frac {\partial^2}{\partial S^2}\mathcal H_1(\rho^c,S^c) \frac {\partial }{\partial \rho} \mathcal H_1(\rho^c,S^c)dt\\\nonumber
&\quad+\frac 12\phi_c^1(S^c,t) \phi_c^2(\rho^c,t) \frac {\partial^2}{\partial \rho \partial S}\mathcal H_1(\rho^c,S^c) \frac {\partial }{\partial S} \mathcal H_1(\rho^c,S^c) dt \\\nonumber 
\frac {d S^{c}}{dt}&=-\phi_c^1(S^c,t) \phi_c^2(\rho^c,t) \frac {\partial}{\partial \rho}\mathcal H_0(\rho^c,S^c)-\phi_c^1(S^c,t) \phi_c^2(\rho,t) \frac {\partial}{\partial \rho}\mathcal H_1(\rho^c,S^c) dW_t\\\nonumber
&\quad+\frac 12\phi_c^1(S^c,t) \phi_c^2(\rho^c,t) \frac {\partial^2}{\partial S\partial \rho}\mathcal H_1(\rho^c,S^c)\frac {\partial }{\partial \rho} \mathcal H_1(\rho^c,S^c)dt \\\nonumber
&\quad-\frac 12\phi_c^1(S^c,t) \phi_c^2(\rho^c,t) \frac {\partial^2}{\partial \rho^2 }\mathcal H_1(\rho^c,S^c)\frac {\partial }{\partial S} \mathcal H_1(\rho^c,S^c) dt, 
\end{align}
where
\begin{align*}
(\frac {\partial^2}{\partial S^2}\mathcal H_1(\rho,S))_{ii}&=\eta_1 \sum_{j\in N(i)}\frac {\rho_i+\rho_j}2,\quad (\frac {\partial^2}{\partial S^2}\mathcal H_1(\rho,S))_{ij}=-\eta_1\frac {\rho_i+\rho_j}2,  \\
(\frac {\partial^2}{\partial S\partial \rho}\mathcal H_1(\rho,S))_{ii}&=\sum_{j\in N(i)}\frac{\eta_1}2(S_i-S_j), \quad
(\frac {\partial^2}{\partial S\partial \rho}\mathcal H_1(\rho,S))_{ij}=\frac{\eta_1}2(S_i-S_j),\\
(\frac {\partial^2}{\partial \rho^2 }\mathcal H_1(\rho,S))_{ii}&=\eta_4\mathbb W_{ii}+\eta_2(\sum_{j\in N(i)}\frac {1}{\rho_i}+\frac {\rho_j}{\rho_i^2})-\eta_5 \frac 1{\rho_i},
\\
(\frac {\partial^2}{\partial \rho^2 }\mathcal H_1(\rho,S))_{ij}&=\eta_4\mathbb W_{ij}-\eta_2(\frac 1{\rho_j}+\frac 1{\rho_i}).
\end{align*}
The local Lipschitz continuity of $\mathcal H_0(\rho^c,S^c)$ and $\mathcal H_1(\rho^c,S^c)$ implies the existence and uniqueness of the global mild solution for the truncated equation by the standard  arguments in \cite{oksendal2003stochastic}. Thus, for any $T>0$, there always exists a global mild solution $(\rho^c,S^c)\in \mathcal C([0,T];\mathbb R^N)\times \mathcal C([0,T];\mathbb R^N\times \mathbb R^N)$.
Now we define the local solution of equation \eqref{trun-whf-ito} as follows. For $n\in \mathbb N^+,$ define the stopping time $\tau_n$ by 
\begin{align*}
\tau_n:=\inf\{t\in [0,T]: \|S^n\|_{\mathcal C([0,t];\mathbb R^N\times \mathbb R^N)}\ge n\}\wedge \inf\{t\in [0,T]: \min_{i=1}^N \min_{s\in [0,t]}\rho_i^n(s)\le \frac 1{n}\},
\end{align*}
and $\tau_{\infty}:=\sup_{n\in \mathbb N}\tau_n.$ This is guaranteed by the fact that $$Z^n(t):=\|S^n\|_{\mathcal C([0,t];\mathbb R^N\times \mathbb R^N)}+\frac 1{\min_{i=1}^N \min_{s\in [0,t]}\rho_i^n(s)}<\infty$$ 
defines an increasing, 
continuous and $\mathcal F_t$-adapted process with $Z^n(0)=\frac 1{\min_{i=1}^N\rho_i(0)}.$ 

For $n\le k,$ set $\tau_{k,n}:=\inf\{t\in [0,T]: Z^k(t)\ge n\}$. Then we have $\tau_{k,n}\le \tau_k$ and thus $\phi^1_n(S^k,t)=\phi^1_k(S^k,t)=1$, $\phi_n^2(\rho^k,t)=\phi_k^2(\rho^k,t)$ on $\{t\le \tau_{k,n}\}.$ This leads to $(\rho^k,S^k)=(\rho^n,S^n)$ and $Z^k=Z^n$ a.s. on $\{t\le \tau_{k,n}\}.$ 
We conclude that $\tau_{k,n}=\tau_n,$ a.s. and define the local solution $(\rho,S)$ up to the stopping time $\tau_{\infty}$ by 
 $(\rho,S)=(\rho^n,S^n),$ on $\{t\le \tau_n\}.$ 
\end{proof}

We would like to mention that in \cite{CLZ19,CLZ20a}, the global solution in deterministic case ($\eta_1=\cdots=\eta_5=0$) is obtained by using the energy conservation law if $\mathcal F(\rho)$ contains the Fisher information $\beta I(\rho), \beta>0$. In stochastic case, the existence of global solution becomes more complicate and depends on the relationship between the deterministic energy $\mathcal H_0$ and $\mathcal H_1$.

To see this fact, applying It\^o's formula on $\mathcal H_0$, we obtain that before $\tau^n,$
it holds that
\begin{align}\label{Ito}
\mathcal H_0(\rho^n(t),S^n(t))
&=\mathcal H_0(\rho^n(0),S^n(0))
+\int_0^t\frac {\partial \mathcal H_0}{\partial \rho}^{\top}\frac {\partial\mathcal H_1}{\partial S}dW_s-\int_0^t\frac {\partial\mathcal H_0}{\partial S}^{\top}\frac {\partial\mathcal H_1}{\partial \rho}dW_s\\\nonumber
&-\frac 12 \int_0^t (\frac {\partial \mathcal H_0}{\partial \rho}^{\top} \frac {\partial^2 \mathcal H_1}{\partial S^2} \frac {\partial \mathcal  H_1}{\partial \rho}-
\frac {\partial \mathcal H_0}{\partial \rho}^{\top} \frac {\partial^2 \mathcal H_1}{\partial S\partial \rho} \frac {\partial \mathcal  H_1}{\partial S})ds\\\nonumber
&
+\frac 12 \int_0^t (\frac {\partial \mathcal H_0}{\partial S}^{\top} \frac {\partial^2 \mathcal H_1}{\partial S \partial \rho} \frac {\partial \mathcal  H_1}{\partial \rho}-
\frac {\partial \mathcal H_0}{\partial S}^{\top} \frac {\partial^2 \mathcal H_1}{\partial \rho^2} \frac {\partial \mathcal  H_1}{\partial S})ds\\\nonumber
&+\int_0^t \frac 12 (\frac {\partial^2 \mathcal H_0}{\partial \rho^2})\cdot(\frac {\partial \mathcal H_1}{\partial S},\frac {\partial \mathcal H_1}{\partial S} )ds\\\nonumber
&-\int_0^t  (\frac {\partial^2 \mathcal H_0}{\partial \rho\partial S})\cdot(\frac {\partial \mathcal H_1}{\partial S},\frac {\partial \mathcal H_1}{\partial \rho} )ds\\\nonumber
&+\int_0^t \frac 12(\frac {\partial^2 \mathcal H_0}{\partial S^2})\cdot(\frac {\partial \mathcal H_1}{\partial \rho},\frac {\partial \mathcal H_1}{\partial \rho} ) ds.
\end{align}
To get the global existence of the solution, it suffices to show 
 $$\sup_{n}\mathbb E\Big[\sup_{s\in [0,\tau^n]}\mathcal H_0(\rho(s),S(s)) \Big]< \infty.$$ 
Therefore, a sufficient condition to ensure the global existence of the solution is that
\begin{align*}
|\{\mathcal H_0,\mathcal H_1\}
|+|\{\mathcal H_1,\{\mathcal H_0,\mathcal H_1\}\}|&\le c_1\mathcal H_0+C_1\;\; \text{for some} \;c_1,C_1>0,
\end{align*}
where $\{\cdot,\cdot\}$ is the Poisson bracket.
In particular, when $\{\mathcal H_0,\mathcal H_1\}=0,$ $\mathcal H_0$ is an invariant of the stochastic Wasserstein Hamiltonian flow. A typical example is $\mathcal H_0$ being a multiple of $\mathcal H_1.$

\begin{tm}\label{well-shs}
Let  $\beta>0$, $\alpha\in \mathbb R$, $T>0,$ $\rho(0)\in \mathcal P_{o}(G)$, and $S(0)\in \mathbb R^d$ be $\mathcal F_{0}$--measurable and have the finite second moment. 
Assume that there exists $c_1,C_1>0$ such that 
$$|\{\mathcal H_0,\mathcal H_1\}
|+|\{\mathcal H_1,\{\mathcal H_0,\mathcal H_1\}\}|\le c_1\mathcal H_0+C_1$$
Then there exists a unique global solution of \eqref{sdhs} satisfying $\rho(t)\in \mathcal P_{o}(G), t\in [0,T].$  
\end{tm}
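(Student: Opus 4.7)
The plan is to extend the local solution from Proposition~\ref{local-well} to a global one on $[0,T]$ by establishing an $n$-uniform a priori bound on $\mathcal H_0(\rho^n(t), S^n(t))$ up to the stopping time $\tau_n$, and then exploiting the coercive structure provided by the Fisher information $\beta I(\rho)$ inside $\mathcal H_0$ to preclude both blow-up alternatives.

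The starting point is the It\^o expansion \eqref{Ito} already written above. Regrouping its terms through the Poisson bracket identity $\{F,G\} = \partial_\rho F\cdot \partial_S G - \partial_S F\cdot \partial_\rho G$, the stochastic integral collapses to $\int_0^{t\wedge \tau_n} \{\mathcal H_0,\mathcal H_1\}\, dW_s$, while the six drift-correction terms combine into a Lebesgue integral whose integrand is a constant multiple of the iterated bracket $\{\mathcal H_1,\{\mathcal H_0,\mathcal H_1\}\}$. Invoking the assumed growth hypothesis $|\{\mathcal H_0,\mathcal H_1\}| + |\{\mathcal H_1,\{\mathcal H_0,\mathcal H_1\}\}| \le c_1 \mathcal H_0 + C_1$, applying the Burkholder-Davis-Gundy inequality to the martingale, and using $ab \le \varepsilon a^2 + C_\varepsilon b^2$ to absorb the resulting square root into $\mathbb E[\sup \mathcal H_0]$, one arrives at
\begin{align*}
\mathbb E\Big[\sup_{s\in[0,t\wedge\tau_n]}\mathcal H_0(\rho^n(s),S^n(s))\Big] \le C + C\int_0^t \mathbb E\Big[\sup_{r\in[0,s\wedge\tau_n]}\mathcal H_0(\rho^n(r),S^n(r))\Big]\,ds,
\end{align*}
and Gronwall's lemma yields a bound $C(T)$ independent of $n$.

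I then convert this energy bound into uniform control over the two blow-up mechanisms in Proposition~\ref{local-well}. Because $\beta>0$ and the Fisher information \eqref{DiscFisher} diverges like $|\log\rho_i|^2$ as any $\rho_i\to 0^+$, Markov's inequality applied to $\sup \mathcal H_0$ rules out $\min_i \rho_i^n\to 0$ before time $T$ outside an event of vanishing probability as $n\to\infty$. With $\rho^n$ uniformly bounded away from the boundary, the weight $\theta_{ij}(\rho^n) = (\rho_i^n+\rho_j^n)/2$ is bounded below, so the bound on $\mathcal K(\rho^n,S^n) \le \mathcal H_0(\rho^n,S^n) + \alpha L(\rho^n) - \mathcal F(\rho^n)$ translates into a uniform bound on the edge differences $|S_i^n-S_j^n|$. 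Connectedness of $G$ reduces any remaining growth in $S^n$ to the single gauge mode $\frac1N\sum_i S_i^n$, whose dynamics is governed by the (now uniformly bounded) right-hand side of the $S^n$ equation and therefore grows at most linearly on $[0,T]$.

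Combining these controls and using the definition of $\tau_n$, one concludes $\mathbb P(\tau_n \le T) \to 0$, hence $\tau_\infty := \sup_n \tau_n \ge T$ almost surely, and the unique local solution of Proposition~\ref{local-well} extends to a global solution on $[0,T]$ with $\rho(t)\in \mathcal P_o(G)$. The main obstacle is the mismatch between the two blow-up modes: the Fisher term penalizes only distance-to-boundary of $\rho$, and the kinetic term $\mathcal K$ controls only the differences of $S$, so the gauge-invariant component of $S^n$ must be bootstrapped from the evolution equation itself rather than from the Hamiltonian bound, which is the most delicate step of the argument.
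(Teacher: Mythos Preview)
Your proposal is correct and follows essentially the same route as the paper: It\^o expansion \eqref{Ito} of $\mathcal H_0$, identification of the martingale and drift terms as $\{\mathcal H_0,\mathcal H_1\}$ and (a multiple of) $\{\mathcal H_1,\{\mathcal H_0,\mathcal H_1\}\}$, BDG plus Gronwall for a uniform-in-$n$ bound on $\mathbb E[\sup_{s\le t\wedge\tau_n}\mathcal H_0]$, and finally coercivity of $\beta I(\rho)$ and $\mathcal K(\rho,S)$ to rule out the two blow-up alternatives of Proposition~\ref{local-well}.

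The one substantive difference is that the paper stops after concluding $\min_i\rho_i(t)>0$ and $\max_{(i,j)\in E}|S_i(t)-S_j(t)|<\infty$ a.s., whereas you explicitly close the argument for the gauge component $\bar S=\tfrac1N\sum_i S_i$ by bootstrapping from the $S$-equation once $\rho$ and the edge differences are controlled. Since the stopping time $\tau_n$ in Proposition~\ref{local-well} is triggered by $\|S^n\|$ itself (not merely the differences), your extra step makes the passage to $\tau_\infty\ge T$ a.s.\ more complete than the paper's terse conclusion; otherwise the two arguments coincide.
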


\begin{proof}
It suffices to prove that $\rho(t)\in \mathcal P_{o}(G), t\in [0,T].$ Let $\alpha \ge 0$. By applying \eqref{Ito}, taking expectation, and employing  the Burkerholder inequality, we achieve that 
\begin{align*}
\E[\sup_{t\in [0,T]}\mathcal H_0(\rho(t),S(t))]
&\le \E[\mathcal H_0(\rho(0),S(0))]+C \int_0^t\E[\mathcal H_0(\rho(s),S(s))+C_1']ds\\
&+C\E[\Big(\int_0^t(\mathcal H_0(\rho(s),S(s))+C_1')^2ds\Big)^{\frac 12}].
\end{align*}
The Gronwall's inequality leads that 
\begin{align*}
\E[\sup_{t\in [0,T]}\mathcal H_0(\rho(t),S(t))]&\le C(T,\rho(0),S(0)).
\end{align*}
It follows that $\sup\limits_{t\in [0,T]}\mathcal H_0(\rho(t),S(t))<\infty, a.s.$ Due to the fact that 
$$\mathcal H_0(\rho,S)=\mathcal K(\rho,S)+\beta I(\rho)+V(\rho)+W(\rho)-\alpha L(\rho),$$ there always exists a constant $C>0$ such that $\mathcal H_0(\rho,S)+C>0.$ 
Therefore, if $\beta>0$, $\alpha\ge 0$, we obtain that 
\begin{align*}
\sup\limits_{t\in [0,T]} \mathcal K(\rho(t),S(t))<\infty, \text{a.s.}, \;\text{and}\; \sup\limits_{t\in [0,T]} I(\rho(t))<\infty, \text{a.s.}
\end{align*}
Let us define  $C_{Kin}:=\sup\limits_{t\in [0,T]} \mathcal K(\rho(t),S(t)).$
The fact that $I$ is positive infinity on the boundary and the definition of $\mathcal K$ yield that 
\begin{align*}
&\min_{t\in [0,T]}\min_{i=1}^N\rho_i(t)>0, a.s.\\
& \max_{ij\in E}|S_i(t)-S_j(t)|\le  \sqrt{ \frac {C_{Kin}}{\min\limits_{t\in [0,T]}\min\limits_{i=1}^N\rho_i(t)} }<\infty, a.s.
\end{align*}
We conclude that  $\rho_i(t)\in \mathcal P_o(G), t\in [0,T],$ a.s.
The case that $\alpha<0$ can be proven by the similar steps and the fact that $x\log(x)$ is uniformly bounded in $[0,1].$

\end{proof}

\begin{rk}
Theorem \ref{well-shs} still holds for more general Hamiltonian system which does not contain the Fisher information but any other potential $\mathbb Z(\rho)$ provided that $\mathbb Z(\rho)$ is smooth, bounded from below in $\mathcal P_o(G)$, and it approaches infinity at the boundary of $\mathcal P(G)$ . We also would like to remark that other choices of $\theta,\widetilde \theta$ are available (see, e.g., \cite{CLZ20a}).
\end{rk}

As shown in Theorem 4.1, the lower bound of the density $\rho$ is a positive random variable, a.s.. We end this section with three examples of stochastic nonlinear Schr\"odinger equation on $G.$ When $G$ is a lattice graph, the following examples can be viewed as the spatial approximations of the stochastic nonlinear Schr\"odinger equation on continuous space \cite{CHL16b}.

\begin{ex}
[Nonlinear Schr\"odinger equation on graph with common noise \cite{PhysRevE.49.4627,PhysRevE.63.025601,MR1425880,CHL16b}] 
When nonlinear Schr\"odinger equation on graph is perturbed by the common noise, it reads 
\begin{align}\label{snls1}
\bi \frac {d u_j}{dt}=-\frac 12 (\Delta_G u)_j+u_j\mathbb V_j+u_j\sum_{l=1}^N\mathbb W_{jl}|u_l|^2+\sigma_ju_j\circ dW_t.
\end{align}
Here $\sigma$ is a potential on $G$, $\Delta_G$ the nonlinear Laplacian operator on $G$ \cite{CLZ19,CLZ20a} is  defined by 
\begin{align*}
(\Delta_G u)_j&=-u_j\Big(\frac 1{|u_j|^2}\Big[\sum_{l\in N(j)} \omega_{jl}\theta_{jl}(\Im(\log(u_j))-\Im(\log(u_l))) \\
&+\sum_{l\in N(j)}\widetilde \omega_{jl} \widetilde \theta_{jl}(\Re(\log(u_j))-\Re(\log(u_l)) \Big] \\
&+\sum_{l\in N(j)} \omega_{jl} \frac {\partial \theta_{jl}}{\partial \rho_j}|\Im(\log(u_j)-\log(u_l))|^2 \\
&+\sum_{l\in N(j)}\widetilde \omega_{jl} \frac {\partial \widetilde \theta_{jl}}{\partial \rho_j}|\Re(\log(u_j)-\log(u_l))|^2
\Big),
\end{align*}
where $\Im$ and $\Re$ are the imaginary and real parts of a complex number, respectively.   

Denoting  the complex form $u_j=\sqrt{\rho_j}e^{\bi S_j(t)}, j=1,\cdots,N,$ the Madelung system on graph becomes,
\begin{align}\label{snls-mad}
& d\rho_i=\sum_{j\in N(i)}\omega_{ij}(S_i-S_j)\theta_{ij}(\rho) dt;\\\nonumber
& d S_i+(\sum_{j\in N(i)}\frac 12 \omega_{ij}(S_i-S_j)^2 \frac {\partial \theta_{ij}}{\partial \rho_i}+\frac 18 \frac {\partial}{\partial \rho_i} I(\rho)+\mathbb V_i
+\sum_{j\in N(i)}\mathbb W_{ij}\rho_j)dt+\sigma_idW_t=0.
\end{align} 
It can be verified that the nonlinear Schr\"odinger equation on graph with common noise satisfies the condition of Theorem \ref{well-shs}. Consequently, there exists a global unique solution for \eqref{snls1}.

\end{ex}

\begin{ex}[Logarithmic Schr\"odinger equation with common noise on graph \cite{CS21}] 
The logarithmic Schr\"odinger equation on graph  perturbed by the common noise is 
\begin{align}\label{snls10}
\bi \frac {d u_j}{dt}=-\frac 12 (\Delta_G u)_j+u_j\mathbb V_j+u_j\sum_{l=1}^N\mathbb W_{jl}|u_l|^2-u_j\log(|u_j|^2)+\sigma_ju_j\circ dW_t.
\end{align}
Here $\sigma$ is a potential on $G$.
Denoting  the complex form $u_j=\sqrt{\rho_j}e^{\bi S_j(t)}, j=1,\cdots,N,$ the Madelung system on graph follows,
\begin{small}
\begin{align*}
d\rho_i=\sum_{j\in N(i)}\omega_{ij}(S_i-S_j)\theta_{ij}(\rho) dt;&\\
d S_i+(\sum_{j\in N(i)}\frac{\omega_{ij}}{2}(S_i-S_j)^2 \frac {\partial \theta_{ij}}{\partial \rho_i}+\frac 18 \frac {\partial}{\partial \rho_i} I(\rho)+\mathbb V_i &
+\sum_{j\in N(i)}\mathbb W_{ij}\rho_j-\log(\rho_i))dt+\sigma_idW_t=0.
\end{align*}
\end{small}
One can verify that this nonlinear Schr\"odinger equation on graph with common noise satisfies the condition of Theorem \ref{well-shs} with $\mathcal H_0(\rho,S)=\mathcal K(\rho,S)-L(\rho)+\mathcal V(\rho)+\mathcal W(\rho)+\frac 18 I(\rho)$ and $\mathcal H_1(\rho,S)=\sum_{i=1}^N\rho_i\sigma_i.$
As a consequence, there exists a global unique solution for \eqref{snls10}.

\end{ex}

\begin{ex}
[White noise dispersion nonlinear Schr\"odinger equation on graph \cite{Agra01b,Agra01a}] 
The stochastic dispersive Schr\"odinger equation reads
 \begin{align}\label{snls2}
& d\rho_i=\sum_{j\in N(i)}\omega_{ij}(S_i-S_j)\theta_{ij}(\rho) \circ dW_t;\\\nonumber
& d S_i+(\frac 12\sum_{j\in N(i)}\omega_{ij}(S_i-S_j)^2 \frac {\partial \theta_{ij}}{\partial \rho_i}+\frac 18 \frac {\partial}{\partial \rho_i} I(\rho))\circ dW_t+(\mathbb V_i
+\sum_{j\in N(i)}\mathbb W_{ij}\rho_j)dt=0,
\end{align} 
which is equivalent to 
\begin{align*}
\bi \frac {d u_j}{dt}=-\frac 12 (\Delta_G u)_j\circ dW_t+(u_j\mathbb V_j+u_j\sum_{l=1}^N\mathbb W_{jl}|u_l|^2)dt.
\end{align*}
One may repeat the proof of Theorem \ref{well-shs} and show that the  modification energy $\widetilde {\mathcal H_0}(\rho,S)=\mathcal K(\rho,S)+\frac 18 I(\rho)$ is bounded, a.s..  Thus there exists  uniquely  a global solution for \eqref{snls2}.
\end{ex}

We note that these examples are constructed by the critical points of stochastic variational principles and have not been considered before. To the best of our knowledge, the existing results on the existence of solutions for these three examples are obtained in continuous spaces.

We end this section by summarizing that for any initial values, the Wasserstein Hamiltonian flow with common noise established here has local well-posedness up to a positive stopping time. However, it is not clear whether the system is the critical point of a stochastic variational principle. Furthermore, it is technically challenging to directly analyze the existence and uniqueness of the minimizer, even if a stochastic variational principle can be identified. To address the shortcomings of this approach, we propose another approach based on optimal control formulation to construct the boundary value formulation of Wasserstein Hamiltonian flow with common noise in the next section.

\section{Optimal control problem with common noise}

Here we consider the following variational principle in the framework of optimal control
\begin{align}\label{opt-com}
&\inf_{\rho,v}\; [\int_0^1 \frac 12\<v_t,v_t\>_{\theta(\rho_t)} dt]\\\nonumber 
& \text{subject to:}\;\; d\rho(t)+div_G^{\theta}(\rho(t) v(t))+div_G^{\theta}(\rho(t)\nabla_G \Sigma)\circ dW_t=0\\\nonumber
& \text{and}\;\; \rho(0,\omega)=\rho_a,\; \rho(1,\omega)=\rho_b,
\end{align}
where $\Sigma$ is a given vector field on $G,$ $\rho_a$ and $\rho_b$ are given $\mathcal F_0$-measurable and $\mathcal F_{1}$-measurable densities in $\mathcal P_o(G).$ Via a discrete Hopf--Cole transform (see e.g. \cite{CLZ21a}), one can show that the critical point of \eqref{opt-com} formally coincides with that of the discretization of stochastic Schr\"odinger bridge problem in \cite{CLZ21s}, i.e.,
\begin{align*}
\inf\{S(\rho_t,\Phi_t): (-\Delta_{\rho_t})^\dagger \Phi_t\in \mathcal T_{\rho_t}\mathcal P_o(G), \rho(0)=\rho^a, \rho(1)=\rho^b\}.
\end{align*}
Recall that $ (-\Delta_{\rho_t})^\dagger$ is the pseudo inverse of $div_G^{\theta}(\rho\nabla_G(\cdot))$, $\mathcal T_{\rho_t}\mathcal P_o(G)$ is the tangent space of $\mathcal P_o(G)$ at $\rho_t$
and
\begin{align*}
\mathcal S(\rho_t,\Phi_t)&=\<\rho(0),\Phi(0)\>-\<\rho(1),\Phi(1)\>+\int_0^1 \<\partial_t \Phi(t),\rho_t\>+\mathcal H_0(\rho_t, \Phi_t) dt \\
&\quad + \int_0^1\mathcal H_1(\rho_t,\Phi_t)\circ dW(t).
\end{align*}
with $\mathcal H_0(\rho,S)=\frac 14\sum_{ij\in E} (S_i-S_j)^2\theta_{ij}(\rho), \mathcal H_1(\rho,S)=\frac 12\sum_{ij\in E}(\Sigma_i-\Sigma_j)(S_i-S_j)\theta_{ij}(\rho)$.
By the Lagrangian multiplier method, the critical point of \eqref{opt-com}, if exists, is expected to satisfy
\begin{small}
\begin{align*}
 &d\rho_i(t)+\sum_{j\in N(i)} \theta(\rho_i,\rho_j)(S_j-S_i)dt+\sum_{j\in N(i)} \theta(\rho_i,\rho_j)(\Sigma_j-\Sigma_i) \circ dW_t=0,\\
&dS_i(t)+\sum_{j\in N(i)} \frac 12(S_j-S_i)^2 \frac {\partial \theta}{\partial \rho_i}(\rho_i,\rho_j)dt+\sum_{j\in N(i)}  (S_i-S_j)(\Sigma_i-\Sigma_j)\frac {\partial \theta}{\partial \rho_i}(\rho_i,\rho_j) \circ dW_t=0.
\end{align*}
\end{small}
However, due to the low regularity of $W$, it seems difficult to directly show the existence of the minimizer of \eqref{opt-com}. To overcome the challenges, we consider an optimal control problem perturbed by Wong-Zakai approximations of the Wiener process.

\subsection{Optimal control perturbed by Wong--Zakai approximations}
In this part, we prove the existence of the minimizer of the optimal control problem with Wong--Zakai approximations, which is formulated as 
\begin{align}\label{ot-graph-wk}
&\inf_{\rho,v}\; [\int_0^1 \frac 12\<v_t,v_t\>_{\theta(\rho_t)} dt]\\\nonumber 
& \text{subject to:}\;\; d\rho(t)+div_G^{\theta}(\rho(t) v(t))+div_G^{\theta}(\rho(t)\nabla_G \Sigma) dW_t^{\delta}=0\\\nonumber
& \text{and}\;\; \rho(0,\omega)=\rho_a,\; \rho(1,\omega)=\rho_b.
\end{align}
    
It should be mentioned that the critical points of \eqref{ot-graph-wk} and \eqref{gen-var-pri}, if exist, share the same equation. However, it is not clear how to obtain the existence of the minimizer of \eqref{gen-var-pri}, which motivates us to investigate the minimizer of \eqref{ot-graph-wk}. To this end,
we first illustrate that the value of \eqref{ot-graph-wk} is finite.

Given $\rho^a,\rho^b\in \mathcal P(G)$, 
we define the feasible set $C_{F}(\rho^a,\rho^b)$ of pairs $(\rho,m)$ 

\begin{align*}
  C_F(\rho^a, \rho^b) & = \Bigg\{ \rho\in H^{1}([0,1];\mathcal P(G)), m\in L^2([0,1];\mathcal S^{N\times N}) \Bigg| 
     (\rho(0),\rho(1))=(\rho^a,\rho^b), \\
    &\qquad d\rho_i(t)+\underset{j\in N(i)}{\sum} m_{ij}dt + \underset{j\in N(i)}{\sum}(\Sigma_j-\Sigma_i)\theta_{ij}(\rho) dW^{\delta}(t)=0.
\Bigg\}
\end{align*}
Here $\mathcal S^{N\times N}$ denotes the skew-symmetric matrix, $N$ is the node number. 
We consider an equivalent form of \eqref{ot-graph-wk}, i.e., 
$\inf\limits_{\rho,m} \mathcal A(\rho,m)$ over the set $C_{F}(\rho^a,\rho^b),$
where 
\begin{align}\label{equ-ot-graph}
\mathcal A(\rho,m):=\int_0^1 \frac 14\sum_{(i,j)\in E} L(\theta_{ij}(\rho),m_{ij}) dt,
\end{align}
$L(x,y)=\frac {y^2}x$ if $x>0$, $L(x,y)=0$ if $x=y=0$ and $L(x,y)=\infty$ otherwise.
The equivalence between \eqref{ot-graph-wk} and $\inf_{\rho,m}$\eqref{equ-ot-graph} is based on the following reasons.

$\eqref{ot-graph-wk}\ge \inf_{\rho,m} \eqref{equ-ot-graph}$: this part is straightly forward by defining $m_{ij}=\theta_{ij}(S_i-S_j).$ When $\theta_{ij}=0,$ define $m_{ij}=0.$

$\eqref{ot-graph-wk}\le \inf_{\rho,m} \eqref{equ-ot-graph}$: For any fixed $\rho$, denote $v_{ij}=\frac {m_{ij}}{\theta_{ij}},$ and $\mathbb H_{\rho}=\{[v] \subset \mathcal S^{N\times N}|w \in [v] \; \text{if an only if}\; v_{ij}=w_{ij}\;  \text{for} \; \theta_{ij} (\rho)\neq 0\}. $ Under the graph inner product $\<\cdot, \cdot\>_{\theta(\rho)},$ $\mathbb H_{\rho}$ forms a finite dimensional subspace. Thus $\nabla_G$ defines a linear map from the potential functional space (consider $L^2(G)$ such that it is also an Hilbert space) to $\mathbb H_{\rho}$, and $div_G$ defines a map from the matrix space to $L^2(G).$ Denote $P_{\rho}$ the orthogonal projection in $\mathbb H_{\rho}$ onto the range of $\nabla_G.$
Then for any feasible path $(\rho_t,m_t), t\in [0,1]$ in \eqref{equ-ot-graph}, one can always find a potential functional $S_t$ such that $P_{\rho_t} v_t=\nabla_G S_t$. Thanks to the fact that $\mathbb H_{\rho}=Ran(\nabla_G)\otimes Ker(div_G)$, we have that $(I-P_{\rho_t})v_t\in Ker(div_G)$ and thus $div(\rho_t v_t)=div_G(\rho_t\nabla_G S_t).$ As a consequence, $(\rho_t,S_t)$ also belongs to the feasible set of \eqref{ot-graph-wk}.

\begin{prop}\label{non-empty-wk}
For any $\rho^a,\rho^b \in \mathcal P(G),$ there is a path $(\rho,m)\in C_{F}(\rho^a,\rho^b)$ such that $\mathcal A(\rho,m)<\infty.$
\end{prop}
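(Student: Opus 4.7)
\textbf{Proof plan for Proposition \ref{non-empty-wk}.}
The key observation is that the Wong--Zakai approximation $W^\delta$ is piecewise linear on the partition $\{t_k=k\delta\}$, so $\dot W^\delta$ exists a.e. and is piecewise constant with finitely many pieces on $[0,1]$; in particular $M(\omega):=\|\dot W^\delta\|_{L^\infty([0,1])}<\infty$ a.s. This converts the stochastic constraint into an a.s.\ ordinary differential equation with a.s.\ bounded random coefficients, which allows us to reduce to the deterministic case by a change of variable.

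I will perform the substitution
\begin{equation*}
\tilde m_{ij}(t)\;:=\;m_{ij}(t)\,+\,(\Sigma_j-\Sigma_i)\,\theta_{ij}(\rho(t))\,\dot W^\delta(t).
\end{equation*}
Because $\Sigma_j-\Sigma_i$ and $m_{ij}$ are each antisymmetric, $\tilde m$ is again a skew-symmetric matrix-valued process, and the stochastic continuity equation defining $C_F(\rho^a,\rho^b)$ becomes the deterministic continuity equation
\begin{equation*}
\frac{d\rho_i}{dt}+\sum_{j\in N(i)}\tilde m_{ij}=0,\qquad \rho(0)=\rho^a,\;\rho(1)=\rho^b.
\end{equation*}
For the deterministic problem, the existing discrete Benamou--Brenier theory on a connected graph \cite{CHLZ12,Mas11,CLZ20a} guarantees the existence of a smooth path $\rho:[0,1]\to\mathcal P(G)$ together with a skew-symmetric $\tilde m:[0,1]\to \mathcal S^{N\times N}$ realizing the endpoints with finite Wasserstein action
\begin{equation*}
\mathcal A_{\det}(\rho,\tilde m)\,:=\,\int_0^1\frac14\sum_{(i,j)\in E}\frac{\tilde m_{ij}^2}{\theta_{ij}(\rho)}\,dt\,<\,\infty,
\end{equation*}
with the usual convention that $\tilde m_{ij}=0$ whenever $\theta_{ij}(\rho)=0$ (so the integrand is well defined). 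I take this deterministic $(\rho,\tilde m)$, which is trivially $\mathcal F_t$-measurable, and then define $m_{ij}=\tilde m_{ij}-(\Sigma_j-\Sigma_i)\theta_{ij}(\rho)\dot W^\delta$ to land back in $C_F(\rho^a,\rho^b)$.

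It remains to bound $\mathcal A(\rho,m)$. Expanding the square gives
\begin{equation*}
\frac{m_{ij}^2}{\theta_{ij}(\rho)}=\frac{\tilde m_{ij}^2}{\theta_{ij}(\rho)}\,-\,2\tilde m_{ij}(\Sigma_j-\Sigma_i)\dot W^\delta\,+\,(\Sigma_j-\Sigma_i)^2\theta_{ij}(\rho)(\dot W^\delta)^2.
\end{equation*}
The first term integrates to $4\mathcal A_{\det}(\rho,\tilde m)<\infty$. For the last term, the property $\theta_{ij}(\rho)\le\max(\rho_i,\rho_j)\le 1$ (from the assumed axioms on $\Theta$) together with $|\dot W^\delta|\le M(\omega)$ yields an a.s.\ finite bound. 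For the cross term, Cauchy--Schwarz gives
\begin{equation*}
\int_0^1|\tilde m_{ij}|\,dt\,\le\,\Big(\int_0^1\frac{\tilde m_{ij}^2}{\theta_{ij}(\rho)}\,dt\Big)^{1/2}\Big(\int_0^1\theta_{ij}(\rho)\,dt\Big)^{1/2}\,<\,\infty,
\end{equation*}
so that $|\int_0^1 \tilde m_{ij}(\Sigma_j-\Sigma_i)\dot W^\delta dt|\le M(\omega)|\Sigma_j-\Sigma_i|\int_0^1|\tilde m_{ij}|dt<\infty$ a.s. Summing over the finitely many edges gives $\mathcal A(\rho,m)<\infty$ a.s., as required.

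\textbf{Main obstacle.} The only nontrivial input is the deterministic existence of a finite-action path between arbitrary endpoints in $\mathcal P(G)$ (allowing vanishing components), which is delicate because $\theta_{ij}(\rho)$ degenerates near the boundary of the simplex; this is precisely what the cited discrete optimal transport literature establishes. Everything else, in particular handling the random perturbation, is then a bounded modification controlled by $M(\omega)$ and the $\theta_{ij}\le 1$ bound.
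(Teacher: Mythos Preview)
Your argument is correct and takes a genuinely different route from the paper. The paper proceeds by an induction on the number of nodes: for a two-point graph it writes down an explicit piecewise-linear $\rho_1(t)$, solves the constraint for $m_{21}$, and checks $\int_0^1 m_{21}^2\,dt<\infty$ a.s.; for larger graphs it concatenates paths through the vertex distribution $(0,\dots,0,1)$ while shrinking the support of $\rho$ step by step. Your approach instead absorbs the Wong--Zakai drift into the flux via $\tilde m_{ij}=m_{ij}+(\Sigma_j-\Sigma_i)\theta_{ij}(\rho)\dot W^\delta$, thereby reducing the feasibility constraint to the \emph{deterministic} discrete continuity equation, and then invokes the known finiteness of the discrete Benamou--Brenier action between arbitrary endpoints in $\mathcal P(G)$. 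The crucial observation that $\dot W^\delta$ is a.s.\ bounded on $[0,1]$ (piecewise constant with finitely many pieces) makes the back-substitution a bounded perturbation, and your expansion of $m_{ij}^2/\theta_{ij}(\rho)$ together with $\theta_{ij}(\rho)\le 1$ handles the three terms cleanly. Your proof is shorter and conceptually sharper---it isolates the stochastic contribution from the deterministic geometry---at the cost of relying on the cited literature for the boundary case $\rho^a,\rho^b\in\partial\mathcal P(G)$, which you rightly flag as the only delicate point. The paper's induction is more self-contained and also previews the concatenation technique reused in the proof of Proposition~\ref{non-empty-wk1}, but your substitution would adapt there as well.
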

\begin{proof}
We use an induction argument on the number of nodes in $G$.  First, consider the case that the cardinality of $V=\{1,2\}$ is 2, the edge $E=\{(1,2),(2,1)\}$ and $\rho^a\neq \rho^b.$
Define $\rho_1(t)=\rho_1^a, t\in [0,1-\delta], \rho_1(t)=\rho_1^a+(\rho_1^b-\rho_1^a) \frac {t-1+\delta}{\delta}, t\in [1-\delta, 1].$
Then it follows that 
\begin{align*}
\rho_1(t)-\rho_1(0)=\int_0^t m_{21}(s) ds+\int_0^t \frac 12(\Sigma_1-\Sigma_2)dW^{\delta}(s).
\end{align*}
Therefore, we get  
\begin{align*}
m_{21}(t)&=\frac 12(\Sigma_2-\Sigma_1) \dot W^{\delta}(t),\; t\in [0,1-\delta],\\
m_{21}(t)&=(\rho_1^b-\rho_1^a) \frac {1}{\delta}+\frac 12(\Sigma_2-\Sigma_1) \dot W^{\delta}(t),\; t\in [1-\delta, 1],
\end{align*}
where $\dot W^{\delta}(t)=\frac {W(t_{k+1})-W(t_k)}{\delta}, t_k=k\delta, k\le K-1, K\delta=1, t\in[t_k,t_{k+1}].$
Notice that 
\begin{align*}
&\int_0^1m_{21}^2(s)ds\\
&=\frac 14\int_{0}^{1-\delta}(\Sigma_2-\Sigma_1)^2 (\dot W^{\delta}(t))^2ds+\int_{1-\delta}^1[(\rho_1^b-\rho_1^a) \frac {1}{\delta}+\frac 12(\Sigma_2-\Sigma_1) \dot W^{\delta}(t)]^2ds\\
&\le \frac 14(\Sigma_2-\Sigma_1)^2 \sum_{k=0}^{K-1} \frac {(W_{t_{k+1}}-W_{t_k})^2}{\delta}+\frac 14(\Sigma_2-\Sigma_1)^2\frac {(W_{t_K}-W_{t_{K-1}})^2}{\delta}\\
&+\frac {(\rho^b_1-\rho^a_1)^2}{\delta}+(\rho_1^b-\rho_1^a) (\Sigma_2-\Sigma_1) \frac {W_{t_K}-W_{t_{K-1}}}{\delta}\le C(\delta)<\infty,\; \text{a.s.}
\end{align*}
This covers the case $n=2$.
When $n>2,$ 
we use the concatenation arguments to show the finiteness of 
\eqref{equ-ot-graph}. Namely, we need show that if there exists $\rho\in \mathcal P(G)$ such that $C_F(\rho^a,\rho)$ and $C_F(\rho,\rho^b)$ have feasible paths then $C_F(\rho^a,\rho^b)$ also has an feasible path. Because for any $\rho_a,\rho_b$, we can set an intermediate state $(0,\cdots,0,1)$ and show that there are feasible paths connecting $\rho_a$ and $(0,\cdots,0,1)$,  $(0,\cdots,0,1)$ and $\rho_b$, respectively. By integrating these two paths continuously, we could construct a feasible path from $\rho_a$ to $\rho_b$.

Without loss of generality, we may assume that $\rho^b=(0,\cdots,0,1)$. If the support of $\rho^a$ is the same as $\rho^b$, then it follows that $\rho^a=\rho^b, (\rho,m)\in C_F(\rho^a,\rho^b)$ as long as 
\begin{align*}
\sum_{j\in N(i)}m_{ij}(t)=\sum_{j\in N(i)}(\Sigma_i-\Sigma_j)\theta(\rho_i,\rho_j) \dot W^{\delta}(t).
\end{align*} 
Supposing that the support of $\rho^a$ has an intersection with the first $N-1$ nodes, we iteratively construct a sequence $\widetilde \rho^0,\cdots,\widetilde \rho^{l_0}$ satisfying that $
\widetilde \rho^0=\rho^a, \; \widetilde \rho^{l_0}=\rho^b,$ the cardinality of the support of $\widetilde \rho^{l}$ is strictly smaller than that of $\widetilde \rho^{l-1}$, and that there is a feasible path connecting $\widetilde \rho^{l-1}$ with $\widetilde \rho^l$ in the interval $[t_{l-1},t_l],$ where $t_l= \frac {l}{l_0}.$
\end{proof}

Introducing the corresponding saddle scheme formally, 
\begin{align*}
\inf_{\rho}\sup_{\lambda} \Big[\mathcal A(\rho,m)-\int_0^1\<\lambda, \dot \rho(t)+div_G^{\theta}(\rho(t) v(t))+div_G^{\theta}(\rho(t)\nabla_G \Sigma) \dot W_t^{\delta}\>dt\Big]
\end{align*}
with $\rho(0)=\rho^a$ and $\rho(1)=\rho^b$, 
it can be seen that there exists $\lambda\in BV_{loc}([0,1];\mathbb R^N)$ such that  the critical point $(\rho,v)$ of the \eqref{ot-graph-wk} satisfies
\begin{align*}
&\theta_{ij}(\rho)[v_{ij}-(\lambda_i-\lambda_j)]=0,\;\forall (i,j)\in E,\\
&\<\dot \lambda,\rho\>-\frac 14\sum_{ij}v_{ij}^2\theta_{ij}(\rho)+ \frac 12\sum_{ij}(\Sigma_i-\Sigma_j)(\lambda_{i}-\lambda_{j})\theta_{ij}(\rho)dW^{\delta}(t)=0, \; \mathcal L^1 \; \text{a.e.}
\end{align*}
Denote $S_i=-\lambda_i$. When the optimal path does not intersect the boundary of $\mathcal P(G),$
the above equations become the stochastic Wasserstein Hamiltonian flow (see e.g. \cite{CLZ21s}), 
\begin{align}\label{swhf-wk}
\dot \rho=\nabla_{S} \mathcal H(\rho,S)+\nabla_{S} \mathcal H_1(\rho,S)\dot W^{\delta},\;\\\nonumber
\dot S=-\nabla_{\rho} \mathcal H(\rho,S)-\nabla_{\rho} \mathcal H_1(\rho,S)\dot W^{\delta},
\end{align}
where $\mathcal H(\rho,S)=\frac 14\sum_{ij\in E} (S_i-S_j)^2\theta_{ij}(\rho), \mathcal H_1(\rho,S)=\frac 12\sum_{ij\in E}(\Sigma_i-\Sigma_j)(S_i-S_j)\theta_{ij}(\rho)$.
Indeed, we have the following result.

\begin{prop}
Let $\rho^a,\rho^b\in \mathcal P(G)$. Assume that $(\rho,m)\in C_F(\rho^a,\rho^b)$ and that $S\in H^1([0,1];\mathbb R^N)$ satisfies 
\begin{align*}
\<\dot S,\rho\>+\frac 14\sum_{ij}(S_i-S_j)^2\theta_{ij}(\rho)+ \sum_{ij}(\Sigma_i-\Sigma_j)(S_{i}-S_{j})\theta_{ij}(\rho)dW^{\delta}(t)\le 0, \; \mathcal L^1 \;\text{a.e.}
\end{align*} 
Then
\begin{enumerate}
\item [(i)] it holds that 
$$\<S(1),\rho^b\>-\<S(0),\rho^a\>\le \mathcal A(\rho,m).$$
\item [(ii)] Equality holds in $(i)$ if and only if 
\begin{align*}
&m_{ij}=\theta_{ij}(\rho)(\nabla_G S)_{ij}, \; \forall (i,j)\in E, \;  \\
&\<\dot S,\rho\>+\frac 14\sum_{ij}(S_i-S_j)\theta_{ij}(\rho)+ \sum_{ij}(\Sigma_i-\Sigma_j)(S_{i}-S_{j})\theta_{ij}(\rho)dW^{\delta}(t)= 0.
\end{align*}
\item [(iii)] If $\rho\in \mathcal P_o(G),$ a.e., then $(\rho,S)$ satisfies \eqref{swhf-wk}, a.e.
\end{enumerate}
\end{prop}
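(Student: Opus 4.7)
The plan is to test $S$ against $\rho$ along the feasible path: differentiate $\langle S(t),\rho(t)\rangle$ in $t$, substitute the $C_F(\rho^a,\rho^b)$-constraint for $\dot\rho$, control $\langle\dot S,\rho\rangle$ by the hypothesized inequality, and close the resulting estimate edge-by-edge via a completion of squares.

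Concretely, I would first write
\[
\frac{d}{dt}\langle S,\rho\rangle = \langle\dot S,\rho\rangle + \langle S,\dot\rho\rangle
\]
and plug in $\dot\rho_i = -\sum_{j\in N(i)} m_{ij} - \sum_{j\in N(i)}(\Sigma_j - \Sigma_i)\theta_{ij}(\rho)\dot W^\delta$. A discrete integration by parts based on the skew-symmetry $m_{ij}=-m_{ji}$ and the symmetry $\theta_{ij}=\theta_{ji}$ reorganizes $\langle S,\dot\rho\rangle$ into an edge sum whose drift part is $-\tfrac12\sum(S_i-S_j)m_{ij}$ and whose noise part is $+\tfrac12\sum(\Sigma_i-\Sigma_j)(S_i-S_j)\theta_{ij}(\rho)\dot W^\delta$. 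Adding the hypothesized HJ-type inequality, the two $\dot W^\delta$-contributions cancel by construction — this matching is precisely what the noise coefficient in the $C_F$-constraint was chosen to achieve — and one is left with the pathwise bound
\[
\frac{d}{dt}\langle S,\rho\rangle \le -\tfrac14\sum(S_i-S_j)^2\theta_{ij}(\rho) - \tfrac12\sum(S_i-S_j)m_{ij}.
\]

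The right-hand side is handled edge-by-edge. On any edge with $\theta_{ij}(\rho)>0$, the elementary identity
\[
\tfrac14\!\left(\frac{m_{ij}}{\sqrt{\theta_{ij}(\rho)}} + \sqrt{\theta_{ij}(\rho)}\,(S_i-S_j)\right)^{\!2} \ge 0
\]
rearranges to $-\tfrac14(S_i-S_j)^2\theta_{ij} - \tfrac12 m_{ij}(S_i-S_j) \le \tfrac14 L(\theta_{ij}(\rho),m_{ij})$; on edges with $\theta_{ij}(\rho)=0$, admissibility together with finiteness of $\mathcal A(\rho,m)$ forces $m_{ij}=0$ and both sides vanish (if $\mathcal A=+\infty$ the claim is trivial). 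Summing over edges, integrating in $t\in[0,1]$, and using $\rho(0)=\rho^a$, $\rho(1)=\rho^b$ yields (i). For (ii), equality in the completion of squares forces $m_{ij} = -\theta_{ij}(\rho)(S_i-S_j) = \theta_{ij}(\rho)(\nabla_G S)_{ij}$ on every edge, and simultaneously all intermediate $\le$'s must become equalities — in particular the hypothesized inequality — which is exactly the characterization in (ii); the converse is immediate by retracing the chain.

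For (iii), the additional assumption $\rho(t)\in\mathcal P_o(G)$ a.e. forces $\theta_{ij}(\rho)>0$ on every edge. Inserting $m_{ij}=\theta_{ij}(\rho)(\nabla_G S)_{ij}$ from (ii) into the $C_F$-constraint reproduces the first line of \eqref{swhf-wk} in its full vector form. The second line is the vectorial counterpart of the scalar identity $\langle\dot S,\rho\rangle + \mathcal H(\rho,S) + \mathcal H_1(\rho,S)\dot W^\delta = 0$; to upgrade scalar to vector I would appeal to the Lagrange multiplier/saddle-point structure sketched just above the Proposition, identifying $-S$ with the multiplier $\lambda$ and reading off the first-order condition in $\rho$, which is admissible precisely because the interior condition $\rho\in\mathcal P_o(G)$ opens all directions of variation in the simplex. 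I expect the main obstacles to be (a) the bookkeeping of sum conventions (ordered versus unordered edges) so that the $\dot W^\delta$-cancellation in the estimate is genuinely exact rather than off by a factor of two, and (b) the scalar-to-vector upgrade in (iii), which cannot be obtained from the scalar identity alone and truly requires the optimization structure together with the strict positivity of $\rho$.
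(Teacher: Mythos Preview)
Your approach for (i) and (ii) is correct and essentially the same as the paper's: both differentiate $\langle S,\rho\rangle$, use the $C_F$-constraint, and close the estimate edge-by-edge. Your completion-of-squares is precisely the paper's H\"older/Young step written out explicitly, and the equality case in (ii) is read off identically. Your flag (a) about the factor of two is well taken: with the paper's inner-product convention $\langle u,v\rangle_{\theta(\rho)}=\tfrac12\sum_{(i,j)\in E}u_{ij}v_{ij}\theta_{ij}(\rho)$, the noise term coming from $\langle S,\dot\rho\rangle$ is $\langle\nabla_G\Sigma,\nabla_G S\rangle_{\theta(\rho)}\dot W^\delta=\tfrac12\sum_{ij}(\Sigma_i-\Sigma_j)(S_i-S_j)\theta_{ij}(\rho)\dot W^\delta$, so the coefficient in the hypothesized HJ inequality must match this in order for the argument to go through; the paper's own proof implicitly uses exactly this normalization.

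On (iii), your caution is appropriate and in fact matches what the paper actually does. The paper's proof of (iii) invokes only the Euler homogeneity of $\mathcal H$ and $\mathcal H_1$ in $\rho$ (since $\theta_{ij}(\rho)=\tfrac12(\rho_i+\rho_j)$ is linear, $\langle\rho,\nabla_\rho\mathcal H\rangle=\mathcal H$ and likewise for $\mathcal H_1$) to rewrite the scalar equality from (ii) as
\[
\big\langle\rho,\ \dot S+\nabla_\rho\mathcal H(\rho,S)+\nabla_\rho\mathcal H_1(\rho,S)\dot W^\delta\big\rangle=0,
\]
and stops there. It does not supply an independent derivation of the full vector equation $\dot S=-\nabla_\rho\mathcal H-\nabla_\rho\mathcal H_1\dot W^\delta$ from the hypotheses of the Proposition alone. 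Your instinct that the upgrade genuinely requires the saddle-point/Lagrange-multiplier structure (so that one may vary $\rho$ freely in the interior of the simplex) is the right one, and is how the paper treats the critical-point equations in the paragraph preceding the Proposition. In short, your plan is not missing anything relative to the paper; if anything, you are being more explicit about a step the paper leaves terse.
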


\begin{proof}
By using the integration by parts and H\"older's inequality, we get
\begin{align*}
&\<S(1),\rho^b\>-\<S(0),\rho^a\>\\
= & \int_0^1(\<m,\nabla_G S\>+\<\rho,\dot S\>)dt+\int_0^1\<\nabla_G \Sigma, \nabla_G S\>_{\theta(\rho)}dW^{\delta}(t)\\
\le & \mathcal A(\rho,m)+\int_0^1 \Big(\<\rho,\dot S\>+\frac 12 \|\nabla_G S\|_{\theta(\rho)}^2+\<\nabla_G \Sigma, \nabla_G S\>_{\theta(\rho)}\dot {W^{\delta}(t)}\Big)dt \le \mathcal A(\rho,m).
\end{align*} 
From the above estimate, the equality holds if and only if the conditions in (ii) hold.
If  $\rho\in \mathcal P_o(G),$ a.e., we obtain
\begin{align*}
0&=\<\rho,\dot S+\nabla_{\rho}\mathcal H(\rho,S)+\nabla_{\rho}\mathcal H_1(\rho,S)\dot W^{\delta}\>\\
&=\<\rho,\dot S\>+\mathcal H(\rho,S)+\mathcal H_1(\rho,S)\dot W^{\delta}\le 0,
\end{align*}
which completes the proof.
\end{proof}

Now we focus on the existence of the minimizer of \eqref{ot-graph-wk}.

\begin{tm}\label{tm-ver1}
Let $\rho^a,\rho^b\in \mathcal P(G).$ There exists $(\rho^*,v^*,m^*)$ such that 
$(\rho^*,v^*)$ minimizes \eqref{ot-graph-wk} and $(\rho^*,m^*)$ minimizes $\inf_{\rho,m} \mathcal A(\rho,m).$
\end{tm}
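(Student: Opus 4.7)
The plan is to use the direct method of the calculus of variations, working pathwise in $\omega$ since the Wong--Zakai approximation $W^{\delta}(\cdot,\omega)$ is piecewise linear and hence smooth almost surely. I will work with the equivalent formulation $\inf_{\rho,m}\mathcal{A}(\rho,m)$ over $C_F(\rho^a,\rho^b)$, which is convenient because the integrand $L(x,y)=y^2/x$ is jointly convex, lower semi-continuous, and positively $1$-homogeneous, while the constraint defining $C_F$ is linear in $m$. Once a minimizer $(\rho^*,m^*)$ is produced, $v^*$ is recovered by the projection argument used in the equivalence between \eqref{ot-graph-wk} and $\inf_{\rho,m}\mathcal{A}(\rho,m)$, namely $v^*_{ij}=m^*_{ij}/\theta_{ij}(\rho^*)$ on $\{\theta_{ij}(\rho^*)>0\}$ and $0$ otherwise.

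Proposition \ref{non-empty-wk} gives that the infimum is finite. Take a minimizing sequence $(\rho^n,m^n)\in C_F(\rho^a,\rho^b)$. Since $\theta_{ij}(\rho)\le \max(\rho_i,\rho_j)\le 1$, the identity $m_{ij}^2=\theta_{ij}(\rho)\,L(\theta_{ij}(\rho),m_{ij})$ bounds $\|m^n\|_{L^2([0,1];\mathcal{S}^{N\times N})}^2$ by $4\mathcal{A}(\rho^n,m^n)$, so a subsequence converges weakly, $m^n\rightharpoonup m^*$ in $L^2$. The continuity equation in the definition of $C_F$ then controls $\dot{\rho}^n$ in $L^2$ (the drift $\sum_{j\in N(i)}m^n_{ij}$ is $L^2$-bounded, and the Wong--Zakai coefficient $\sum_{j}(\Sigma_j-\Sigma_i)\theta_{ij}(\rho^n)\dot W^{\delta}$ is bounded pointwise a.s.). Hence $\rho^n$ is bounded in $H^1([0,1];\mathbb{R}^N)$ and, along a further subsequence, $\rho^n\to\rho^*$ uniformly by Arzel\`a--Ascoli. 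Continuity of $\Theta$ gives $\theta_{ij}(\rho^n)\to\theta_{ij}(\rho^*)$ uniformly, so passing to the limit in the linear constraint yields $(\rho^*,m^*)\in C_F(\rho^a,\rho^b)$ with $\rho^*(0)=\rho^a$ and $\rho^*(1)=\rho^b$.

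For optimality I invoke the classical lower semi-continuity result for integral functionals with a convex and positively one-homogeneous integrand under joint strong--weak convergence of the arguments: since $\theta_{ij}(\rho^n)\to\theta_{ij}(\rho^*)$ in $L^\infty$ and $m^n\rightharpoonup m^*$ in $L^2$, we obtain
\begin{equation*}
\mathcal{A}(\rho^*,m^*)=\int_0^1\frac{1}{4}\sum_{(i,j)\in E}L(\theta_{ij}(\rho^*),m^*_{ij})\,dt\;\le\;\liminf_{n\to\infty}\mathcal{A}(\rho^n,m^n),
\end{equation*}
so $(\rho^*,m^*)$ attains the infimum. Transferring back via the projection argument produces the minimizer $(\rho^*,v^*)$ of \eqref{ot-graph-wk}.

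The main obstacle I anticipate is handling the degenerate set $\{(t,i,j):\theta_{ij}(\rho^*(t))=0\}$: on this set the convention $L(0,0)=0$ and $L(0,y)=+\infty$ for $y\neq 0$ forces $m^*_{ij}=0$ a.e., which must be shown to be automatic from finiteness of $\mathcal{A}(\rho^*,m^*)$ and the lower semi-continuity inequality; this is the standard subtlety in Benamou--Brenier arguments and is what the one-homogeneity of $L$ is for. A secondary technical point, should one wish the minimizer to be an adapted process rather than merely a pathwise object, is to invoke a measurable selection theorem based on the measurability of $\omega\mapsto C_F(\rho^a(\omega),\rho^b(\omega))$ and the Carath\'eodory nature of $\mathcal{A}$; this, however, is not required for the statement as formulated.
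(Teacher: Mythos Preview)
Your proposal is correct and follows essentially the same direct-method approach as the paper: bound the minimizing sequence in $H^1\times L^2$ via Proposition~\ref{non-empty-wk} and the inequality $m_{ij}^2\le L(\theta_{ij}(\rho),m_{ij})$, extract a weakly convergent subsequence, pass to the limit in the (linear) constraint, and conclude by weak lower semi-continuity of $\mathcal A$. Your version is in fact more explicit than the paper's, which compresses the compactness and lower semi-continuity steps into a reference to \cite{GLM19}; your added remarks on the degenerate set and on measurable selection go beyond what the paper addresses here.
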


\begin{proof}
By Proposition \ref{non-empty-wk}, there exists a path $(\rho,m)\in C_F(\rho^a,\rho^b)$ such that $\mathcal A(\rho,m)\le C<\infty$ for some constant $C>0$, which implies that 
$\|m\|_{L^2([0,T];\mathcal S^{n\times n})}\le 2C.$ Then the equation of $\dot \rho$, together with the Poincar\'e--Wirtinger inequality, implies that 
$\rho\in H^1([0,1];\mathbb R^N).$
The intersection of $ C_F(\rho^a,\rho^b)$ with any sub-level set , i.e., $\{(\rho,S)| \mathcal A(\rho, \theta(\rho)\nabla_G S)\le c\}$ for some $c\ge 0,$ 
of $\mathcal A$ is a precompact set in the weak topology of $ H^1([0,1];\mathbb R^N)\times L^2([0,1];\mathcal S^{N\times N})$. Notice that $\mathcal A$ is non-negative and weakly lower semi-continuous on $ H^1([0,1];\mathbb R^N)\times L^2([0,1];\mathcal S^{N\times N})$ (see, e.g., \cite{GLM19}). Thus it achieves its minimum at some path $(\rho^*,m^*)\in C_F(\rho^a,\rho^b).$

Next we define a measurable vector field $v^*$ as $v^*_{ij}(t)=\frac {m^*_{ij}(t)}{\theta_{ij}(\rho)}$ if $\theta_{ij}(\rho)>0$, and $v^*_{ij}(t)=0$ otherwise. As a consequence, we have that $\frac 12\int_{0}^1\|v^*\|_{\theta(\rho)}^2dt=\mathcal A(\rho^*,m^*)<\infty$. Then we show that $(\rho^*,v^*)$ is also a minimizer of \eqref{ot-graph-wk}.
Let $(\rho,v)$ be a feasible set of \eqref{ot-graph-wk} and set $m_{ij}=\theta_{ij}(\rho)v_{ij}.$ It holds that $\mathcal A(\rho,m)=\frac 12 \int_0^1\|v\|^2_{\theta(\rho)}dt<\infty$ and $(\rho,m)\in C_F(\rho^a,\rho^b).$ From the property of $(\rho^*,m^*),$ we have $\int_0^1\|v^*\|^2_{\theta(\rho^*)}dt\le \int_0^1\|v\|^2_{\theta(\rho)}dt.$
\end{proof}

Now we are in a position to  show the following duality property
\begin{align}\label{dual}
\min_{(\rho,m)\in C_F(\rho^a,\rho^b)} \mathcal A(\rho,m)&=
\sup_{S}\Big\{\<S(1),\rho^b\>-\<S(0),\rho^a\>: 
 \sup_{\rho}\{\<\dot S,\rho\>+\frac 14\sum_{ij}v_{ij}^2\theta_{ij}(\rho)\\\nonumber
 &\quad+ \frac 12\sum_{ij}(\Sigma_i-\Sigma_j)(S_{i}-S_{j})\theta_{ij}(\rho)dW^{\delta}(t)\}=0\Big\}.
\end{align} 
The key is using the minimax identity of the following Lagrange multiplier,
\begin{align*}
\mathcal L(\rho,m,S)&:=\<S(1),\rho^b\>-\<S(0),\rho^a\>+\mathcal A(\rho,m)
\\
&\quad-\int_0^1(\<\dot S,\rho\>+\<m,\nabla_G S\>+\<\nabla_G \Sigma,\nabla_G S\>\dot W^{\delta}(t))dt.
\end{align*} 
To ensure the boundedness of $S$, we consider a subset $H^1_{R}$ of $H^1([0,1];\mathbb R^n)$ which is defined by $H^1_{R}:=\{S\in H^1([0,1];\mathbb R^n): \|S\|_{ H^1([0,1];\mathbb R^n)}\le R\}, R>0.$ We claim that the following property holds,
\begin{align}\label{minmax}
\inf_{(\rho,m)}\sup_{S\in H^1_R}\mathcal L(\rho,m,S)=\sup_{S \in H^1_R}\inf_{(\rho,m)}\mathcal L(\rho,m,S)
\end{align}
by applying the standard minimax theorem in \cite[Theorem I.1.1.]{MR3560551}. It suffices to prove that $H^1_R$ is convex and compact in the weak topology, $\mathcal A$ is convex in the weak topology, $\{S\in H_R^1: \mathcal L(\rho,m,S)\ge C\}$ is closed convex set in $H_R^1$, and $\{(\rho,m)\in \mathcal C_F(\rho^a,\rho^b): \mathcal L(\rho,m,S)\le C \}$ is a convex set for any $C\in\mathbb R$. All these conditions can be verified since $\mathcal L$ is convex in $(\rho,m)$ and linear in $\lambda,$ and that $H^1([0,1];\mathbb R^N)$ is compact in $L^2([0,1];\mathbb R^N).$
Furthermore, we also have that 
\begin{align}\label{form}
\sup_{S\in H_R^1} \mathcal L(\rho,m,S)
=\mathcal A(\rho,m)+R\mathcal E(\rho,m),
\end{align}
where the nonnegative functional $\mathcal E$ is defined by 
\begin{align*}
\mathcal E(\rho,m)&:=\sup_{S\in H_1^1}\{\<S(1),\rho^b\>-\<S(0),\rho^a\>
\\
&-\int_0^1(\<\dot S,\rho\>+\<m,\nabla_G S\>+\<\nabla_G \Sigma,\nabla_G S\>_{\theta(\rho)}\dot W^{\delta}(t))dt\}.
\end{align*}
It can been seen that $\mathcal E=0$ only if $(\rho,m)\in C_F(\rho^a,\rho^b)$ and larger than 0 otherwise.
By making use of the lower continuity and convexity of $\mathcal L$ and $\mathcal E$, it  can be seen that for any $R>0,$ there exists $(\rho^{*,R},m^{*,R})$ such that it minimizes $\mathcal A+R\mathcal E.$ Furthermore, the set $\{(\rho^{*,R},m^{*,R})\}_{R>0}$ is precompact, which complete the proof by taking $R \to \infty$.

\begin{lm}\label{lm-minmax1}
The commutative property holds,
\begin{align}\label{minmax1}
 \inf_{(\rho,m)}\sup_{S\in H^1}\mathcal L(\rho,m,S)=\sup_{S \in H^1}\inf_{(\rho,m)}\mathcal L(\rho,m,S).
 \end{align}
\end{lm}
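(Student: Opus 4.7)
The plan is to upgrade the restricted minimax identity \eqref{minmax} from the closed ball $H^1_R$ to the whole space $H^1([0,1];\mathbb R^N)$ by passing $R\to\infty$ through a penalty argument, exploiting the formula \eqref{form} which rewrites $\sup_{S\in H^1_R}\mathcal L=\mathcal A+R\mathcal E$. I would begin by identifying the two sides of \eqref{minmax1} with explicit limits. For the left-hand side, an integration by parts (analogous to the one in the preceding proposition) shows that $\sup_{S\in H^1}\mathcal L(\rho,m,S)=\mathcal A(\rho,m)$ when $(\rho,m)\in C_F(\rho^a,\rho^b)$, while for $(\rho,m)\notin C_F(\rho^a,\rho^b)$ one can scale a test direction to drive the linear-in-$S$ part to $+\infty$. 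Hence
$$\inf_{(\rho,m)}\sup_{S\in H^1}\mathcal L(\rho,m,S)=\inf_{(\rho,m)\in C_F(\rho^a,\rho^b)}\mathcal A(\rho,m).$$
For the right-hand side, the nested family $H^1_R\nearrow H^1$ gives $\sup_{S\in H^1}\inf_{(\rho,m)}\mathcal L=\lim_{R\to\infty}\sup_{S\in H^1_R}\inf_{(\rho,m)}\mathcal L$ by monotonicity of the sup.

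The crucial step is the penalty convergence
$$\lim_{R\to\infty}\inf_{(\rho,m)}\bigl[\mathcal A(\rho,m)+R\mathcal E(\rho,m)\bigr]=\inf_{(\rho,m)\in C_F(\rho^a,\rho^b)}\mathcal A(\rho,m).$$
The $\le$ inequality is immediate since $\mathcal E\equiv 0$ on $C_F(\rho^a,\rho^b)$. For $\ge$, let $(\rho^{*,R},m^{*,R})$ be the minimizer of $\mathcal A+R\mathcal E$, whose existence and precompactness in the weak $H^1\times L^2$ topology were established just before the lemma. Proposition \ref{non-empty-wk} supplies a feasible path of finite action, so
$$\mathcal A(\rho^{*,R},m^{*,R})+R\mathcal E(\rho^{*,R},m^{*,R})\le\inf_{C_F(\rho^a,\rho^b)}\mathcal A<\infty$$
uniformly in $R$, and dividing by $R$ forces $\mathcal E(\rho^{*,R},m^{*,R})\to 0$. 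Extracting a weak cluster point $(\rho^*,m^*)$ and using the weak lower semi-continuity of $\mathcal E$ (a supremum over $S$ of $(\rho,m)$-affine continuous functionals) yields $\mathcal E(\rho^*,m^*)=0$, i.e., $(\rho^*,m^*)\in C_F(\rho^a,\rho^b)$. The weak lower semi-continuity of $\mathcal A$ then gives $\inf_{C_F}\mathcal A\le\mathcal A(\rho^*,m^*)\le\liminf_R\mathcal A(\rho^{*,R},m^{*,R})$, which is the desired $\ge$ bound.

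Chaining the equalities,
\begin{align*}
\inf_{(\rho,m)}\sup_{S\in H^1}\mathcal L
&=\inf_{C_F(\rho^a,\rho^b)}\mathcal A
=\lim_{R\to\infty}\inf_{(\rho,m)}\bigl[\mathcal A+R\mathcal E\bigr]\\
&=\lim_{R\to\infty}\inf_{(\rho,m)}\sup_{S\in H^1_R}\mathcal L
=\lim_{R\to\infty}\sup_{S\in H^1_R}\inf_{(\rho,m)}\mathcal L
=\sup_{S\in H^1}\inf_{(\rho,m)}\mathcal L,
\end{align*}
where the penultimate equality invokes \eqref{minmax}. The main obstacle I anticipate is justifying the weak lower semi-continuity of $\mathcal E$ together with the extraction of a limit of $(\rho^{*,R},m^{*,R})$ that still respects the stochastic constraint: because the Wong--Zakai path $\dot W^\delta$ is piecewise constant and almost surely bounded on $[0,1]$, the $S$-parametrized integrand defining $\mathcal E$ is a continuous linear functional in the weak $H^1\times L^2$ topology, so the supremum over the unit ball of $H^1$ is weakly lower semi-continuous by a standard convex-analytic argument; the remaining steps are routine.
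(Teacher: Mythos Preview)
Your proposal is correct and follows essentially the same approach as the paper: both upgrade the restricted minimax identity \eqref{minmax} to the full space by studying the penalized minimizers $(\rho^{*,R},m^{*,R})$ of $\mathcal A+R\mathcal E$, forcing $\mathcal E(\rho^{*,R},m^{*,R})\to 0$ via the uniform bound $\inf_{C_F}\mathcal A$, extracting a weakly convergent subsequence landing in $C_F(\rho^a,\rho^b)$, and closing with the weak lower semi-continuity of $\mathcal A$. Your presentation is somewhat cleaner in explicitly framing the argument as a penalty-method limit and in flagging the weak lower semi-continuity of $\mathcal E$ (which the paper uses implicitly when asserting $(\rho^{*,\infty},m^{*,\infty})\in C_F$), but the substance is the same.
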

\begin{proof}
Since $(\rho^*,m^*)\in  C_F(\rho^0,\rho^1)$, we have that for any $R>0,$
\begin{align*}
\mathcal A(\rho^*,m^*)=\sup_{S\in H_R^1}\mathcal L(\rho^*,m^*,S)
\ge \inf_{(\rho,m)}\sup_{S\in H_R^1} \mathcal L(\rho,m,S).
\end{align*}
By \eqref{minmax},
\begin{align*}
\mathcal A(\rho^*,m^*)\ge \sup_{S\in H_R^1}\inf_{(\rho,m)} \mathcal L(\rho,m,S).
\end{align*}
Recall that $(\rho^*,m^*)$ is the minimizer of the optimal control problem with common noise.
\eqref{form} and  $(\rho^*,m^*)\in  C_F(\rho^0,\rho^1)$ implies that 
\begin{align*}
    \mathcal A(\rho^*,m^*)&=\sup_{S\in \mathbb H^1_R}\mathcal L(\rho^*,m^*,S)\ge \inf_{\rho,m}\sup_{S\in \mathbb H^1_R}\mathcal L(\rho,m,S)
    \\
    &=\mathcal A(\rho^{*,R},m^{*,R})+R\mathcal E(\rho^{*,R},m^{*,R})\\
    &\ge \mathcal A(\rho^{*,R},m^{*,R}).
\end{align*}

Denote the accumulation point of $\{\rho^{*,R},m^{*,R}\}$ by $(\rho^{*,\infty},m^{*,\infty}).$ It follows that $(\rho^{*,\infty},m^{*,\infty})\in C_F(\rho^a,\rho^b)$ and therefore that
\begin{align*}
\mathcal A(\rho^*,m^*)\le \mathcal A(\rho^{*,\infty},m^{*,\infty}).
\end{align*}
We conclude that 
\begin{align*}
&\mathcal A(\rho^*,m^*)=\mathcal A(\rho^{*,\infty},m^{*,\infty}), \quad
\limsup_{R\to +\infty} R \mathcal E(\rho^{*,R},m^{*,R})=0.
\end{align*}
It suffices to prove $$\inf_{(\rho,m)\in C_F(\rho^0,\rho^1)}\sup_{S\in H^1}\mathcal L(\rho,m,S)\le \sup_{S \in H^1}\inf_{(\rho,m)\in  C_F(\rho^0,\rho^1)}\mathcal L(\rho,m,S).$$
By using \eqref{minmax}, we obtain that 
\begin{align*}
\mathcal A(\rho^{*,\infty},S^{*,\infty})\le \lim_{R\to \infty} \sup_{S\in H^1_R}\inf_{(\rho,m)} \mathcal L(\rho,m,S)\le \sup_{S\in H^1}\inf_{(\rho,m)} \mathcal L(\rho,m,S),
\end{align*}
and that 
\begin{align*}
\mathcal A(\rho^{*,\infty},S^{*,\infty})=\sup_{S\in H^1}\mathcal L(\rho^{*,\infty},m^{*,\infty},S)\ge\inf_{(\rho,m)}\sup_{S\in H^1} \mathcal L(\rho,m,S),
\end{align*}
which completes the proof.
\end{proof}

\begin{tm}\label{tm-dual}
The dual property \eqref{dual} holds.
\end{tm}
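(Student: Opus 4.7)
The plan is to deduce \eqref{dual} from the minimax identity \eqref{minmax1} of Lemma \ref{lm-minmax1} by evaluating both sides of
\begin{equation*}
\inf_{(\rho,m)}\sup_{S\in H^1}\mathcal L(\rho,m,S)=\sup_{S\in H^1}\inf_{(\rho,m)}\mathcal L(\rho,m,S)
\end{equation*}
in closed form: the left-hand side will be recognized as the primal problem $\min_{C_F(\rho^a,\rho^b)}\mathcal A$, and the right-hand side as the constrained supremum appearing on the right of \eqref{dual}.

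For the primal side, I use that $\mathcal L(\rho,m,\cdot)$ is affine in $S$. A discrete integration by parts in $t$ together with the adjoint relation $\<S,div_G^{\theta}(\cdot)\>=-\<\nabla_G S,\cdot\>$ rewrites
\begin{equation*}
\mathcal L(\rho,m,S)=\mathcal A(\rho,m)+\int_0^1 \<S,\,\dot\rho+div_G^{\theta}(m)+div_G^{\theta}(\theta(\rho)\nabla_G\Sigma)\dot W^{\delta}\> \,dt+\text{(endpoint terms)}.
\end{equation*}
Testing against interior variations of $S$ forces the perturbed continuity constraint, while freely varying the boundary values $S(0),S(1)$ forces $\rho(0)=\rho^a$ and $\rho(1)=\rho^b$; hence $\sup_{S\in H^1}\mathcal L(\rho,m,S)$ equals $+\infty$ off $C_F(\rho^a,\rho^b)$ and equals $\mathcal A(\rho,m)$ on $C_F$. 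Existence of the primal minimizer from Theorem \ref{tm-ver1} then gives $\inf_{(\rho,m)}\sup_S\mathcal L=\min_{C_F(\rho^a,\rho^b)}\mathcal A$.

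For the dual side, I carry out $\inf_{(\rho,m)}$ in two stages. Pointwise in $t$ and for fixed $\rho,S$, the map $m\mapsto\frac14\sum_{ij}m_{ij}^2/\theta_{ij}(\rho)-\<m,\nabla_G S\>$ is a strictly convex quadratic, minimized at $m_{ij}=\theta_{ij}(\rho)(\nabla_G S)_{ij}$ with minimum value $-\mathcal H(\rho,S)$. Substituting,
\begin{equation*}
\inf_{m}\mathcal L(\rho,m,S)=\<S(1),\rho^b\>-\<S(0),\rho^a\>-\int_0^1\bigl(\<\dot S,\rho\>+\mathcal H(\rho,S)+\mathcal H_1(\rho,S)\dot W^{\delta}\bigr)dt.
\end{equation*}
Minimizing further over the path $\rho$ converts the integrand into its pointwise supremum over admissible $\rho\in\mathcal P(G)$, so $\inf_{\rho,m}\mathcal L=\<S(1),\rho^b\>-\<S(0),\rho^a\>$ on the set of $S$ where that $\sup_\rho$-term vanishes and is strictly smaller otherwise. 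The equality $\sup_\rho\{\cdots\}=0$ characterizing the admissible $S$ in \eqref{dual} arises from a saddle-point argument: at the minimizer $(\rho^*,m^*)$ of Theorem \ref{tm-ver1}, the associated Lagrange multiplier $S^*$ satisfies the Hamilton--Jacobi-type relation with equality along $\rho^*$, so the constraint is active at the optimum. Combining with Lemma \ref{lm-minmax1} closes the loop and yields \eqref{dual}.

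The main technical obstacle is the exchange of infimum and time integration in the $\rho$-minimization, which requires a measurable selection on the bounded simplex $\mathcal P(G)$, and the precise identification of the dual constraint as an equality rather than an inequality. The Wong--Zakai structure helps significantly: since $\dot W^{\delta}$ is piecewise constant on each $[t_k,t_{k+1})$, the pointwise optimization in $\rho$ reduces on each subinterval to a single deterministic convex minimization on the compact simplex $\mathcal P(G)$, for which a measurable selection is automatic and the attained optimal $\rho^*$ can be pasted across subintervals to certify the active-constraint equality at the saddle point.
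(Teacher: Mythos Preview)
Your overall architecture is the same as the paper's: invoke the minimax identity of Lemma~\ref{lm-minmax1} and identify each side. The primal side is handled essentially as the paper does, via \eqref{form}, and your pointwise minimization in $m$ on the dual side (yielding $-\mathcal H(\rho,S)$) is also the same first step the paper takes before arriving at $H(\dot S,\nabla_G S)=\sup_\rho\{\cdots\}$.

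The gap is in the last reduction, from $\sup_{S\in H^1}\inf_{(\rho,m)}\mathcal L$ to the \emph{equality}-constrained supremum in \eqref{dual}. After minimizing in $m$ and then in $\rho$ you obtain
\[
\inf_{(\rho,m)}\mathcal L(\rho,m,S)=\<S(1),\rho^b\>-\<S(0),\rho^a\>-\int_0^1 H(\dot S,\nabla_G S)\,dt,
\]
and your assertion that this is ``strictly smaller otherwise'' is not correct: whenever $H(\dot S,\nabla_G S)\le 0$ with strict inequality on a set of positive measure, the penalty $-\int H\,dt$ is \emph{positive}, so $\inf_{(\rho,m)}\mathcal L$ exceeds the boundary term. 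Thus the unconstrained $\sup_S$ is not automatically dominated by the $\{H=0\}$-constrained one, and your saddle-point argument does not close this: Lemma~\ref{lm-minmax1} gives only equality of $\inf\sup$ and $\sup\inf$, not attainment of the dual, and the Lagrange multiplier associated with $(\rho^*,m^*)$ (cf.\ Proposition~5.2) satisfies the Hamilton--Jacobi relation only \emph{along} $\rho^*$, not the pointwise $\sup_\rho\{\cdots\}=0$ over all of $\mathcal P(G)$ that \eqref{dual} requires.

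The paper supplies exactly the missing ingredient: for any $S$ with $H(\dot S,\nabla_G S)\le 0$, it sets $\bar S_i=S_i+\alpha$ with $\alpha(t)=-\int_0^t\chi_{\{H<0\}}H(\dot S,\nabla_G S)\,ds$. Since adding a spatial constant leaves $\nabla_G S$ unchanged and shifts $\<\dot S,\rho\>$ by $\dot\alpha$, one gets $H(\dot{\bar S},\nabla_G\bar S)=0$ a.e., while $\<\bar S(1),\rho^b\>-\<\bar S(0),\rho^a\>=\<S(1),\rho^b\>-\<S(0),\rho^a\>+\alpha(1)\ge \<S(1),\rho^b\>-\<S(0),\rho^a\>$. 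This explicit shift is what upgrades the inequality constraint to the equality constraint in \eqref{dual}; your measurable-selection and saddle-point remarks do not substitute for it.
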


\begin{proof}
For any $(\rho,m)\in H^1([0,1],\mathbb R^N)\times L^2([0,1],\mathcal S^{N\times N})$, by \eqref{form}, we have 
\begin{align*}
\sup_{S\in H^1}\mathcal L(\rho,m,S)=\mathcal A(\rho,m)+\mathbb I_{C_F(\rho^a,\rho^b)}(\rho,m),
\end{align*}
where $\mathbb I_{C_F(\rho^a,\rho^b)}(\rho,m)=0$ if $(\rho,m)\in C_F(\rho^a,\rho^b)$, otherwise $\mathbb I_{C_F(\rho^a,\rho^b)}(\rho,m)=\infty.$
By using \eqref{minmax1}, we achieve that 
\begin{align*}
  \inf_{(\rho,m)}\sup_{S\in H^1}\mathcal L(\rho,m,S)
  &=
 \inf_{(\rho,m)} \{\mathcal A(\rho,m)+\mathbb I_{C_F(\rho^a,\rho^b)}(\rho,m)\}\\
 &= \inf_{(\rho,m)\in C_F(\rho^0,\rho^1)} \{\mathcal A(\rho,m)\}.
\end{align*}
Notice that for a fixed $S\in H^1,$ using the H\"older inequality, we get 
\begin{align*}
\inf_{(\rho,m)}\mathcal L(\rho,m,S)=\<S(1),\rho^b\>-\<S(0),\rho^a\>-\int_0^1\max(H(\dot S,\nabla_G S),0)dt,
\end{align*}
where $H(\dot S,\nabla_G S):=\sup_{\rho} \Big\{\<\dot S,\rho\>+\frac 12\|\nabla_G S\|^2_{\theta(\rho)}+\<\nabla_G \Sigma,\nabla_G S\>_{\theta(\rho)}\dot W^{\delta}(t))\Big\}.$
Thus it follows that 
\begin{align*}
\inf_{(\rho,m)\in C_F(\rho^0,\rho^1)} \{\mathcal A(\rho,m)\}=\sup_{S\in H^1}\inf_{(\rho,m)}\mathcal L(\rho,m,S)
\end{align*}
if $H(\dot S,\nabla_G S)\le 0 $, $\mathcal L^1$ a.e. 
It only needs to show the existence of $\bar S$ such that $H(\dot {\bar S},\nabla_G {\bar S})=0$ and that $\<\bar S(1),\rho^b\>-\<\bar S(0),\rho^a\>\ge \<S(1),\rho^b\>-\< S(0),\rho^a\>.$ To this end, let $\mathcal O:=\{H(\dot S,\nabla_G S)<0\}$ and assume that $\mathcal L^1(\mathcal O)>0.$ 
Define $\bar S_i=S_i+\alpha$ with $\alpha(t)=-\int_0^t\chi_{\mathcal O} H(\dot S,\nabla_G S)ds$. Thus, we get
\begin{align*}
& \; \<\bar S(1),\rho^b\>-\<\bar S(0),\rho^a\>-\int_{\mathcal O}H(\dot {\bar S},\nabla_G {\bar S})dt \\
&=\<S(1),\rho^b\>-\<S(0),\rho^a\>-\int_{\mathcal O} H(\dot S,\nabla_G S)dt\\
&\ge \<S(1),\rho^b\>-\<S(0),\rho^a\>,
\end{align*}
which completes the proof.
\end{proof}

Now, we are able to describe the Hamiltonian structure of the minimizer. 
Following the idea of \cite{GLM19}, define $\dot S=\dot S^{\textrm{sing}}+\dot S^{\textrm{abs}}$ where $\dot S^{\textrm{abs}}$ is the absolutely continuous part (w.r.t. $\mathcal{L}_1$) of $\dot S$ and $\dot S^{\textrm{sing}}$ is the singular part (w.r.t. $\mathcal{L}_1$) of $\dot S,$
then we have that 
\begin{align*}
&d\rho(t)+div_G^{\theta}(\rho(t) \nabla_G S(t))+div_G^{\theta}(\rho(t)\nabla_G \Sigma) dW^{\delta}_t=0,\\
&\<\dot S^{\textrm{abs}},\rho\>+\frac 14\sum_{ij}(S_i-S_j)^2\theta_{ij}(\rho)+ \sum_{ij}(\Sigma_i-\Sigma_j)(S_{i}-S_{j})\theta_{ij}(\rho)dW^{\delta}_t=0,\; \mathcal L^1 \; \text{a.e.} \\
&\<\frac {d \dot S^{\textrm{sing}}}{d\mu},\rho\>=0, \; \forall\; \mu\; \text{a.e.}, \; \mu\; \bot \;\mathcal L_1. 
\end{align*}
the singular means the singular part of $\dot S$ w.r.t. $\mathcal L_1$.
When the optimal path does not intersect the boundary of $\mathcal P(G),$ we recover Eq. \eqref{swhf-wk}.
We would like to remark that if the minimizer $(\rho,S)$ is also  predictable  (see e.g. \cite{DZ14}), then  Eq. \eqref{swhf-wk} converges to a stochastic Wasserstein Hamiltonian flow driven by the standard Brownian motion when $\delta\to0$ \cite{CLZ21s}.

In the deterministic case, the $\theta$-connected components has been introduced in \cite{Mas11,GLM19} to study whether the optimal transfer achieves the boundary of the density manifold in optimal transport on graph (see, e.g., \cite[Section 1]{Mas11}, \cite[Section 3]{GLM19}). In this part, we demonstrate that this approach may fail in the stochastic case, such as \eqref{ot-graph-wk}. Let $\rho\in \mathcal P(G).$ The nodes $i,j\in V$ are called $\theta$-connected, if there exists integers $i_1,\cdots,i_k\in V$ such that $i_1=i,i_k=j, (i_l,i_{l+1})\in E, l\le k-1$ and $\theta_{i_1i_2}(\rho)\cdots \theta_{i_{k-1}i_{k}}(\rho) > 0.$
The largest $\theta$-connected set containing $i$ is called the $\theta$-connected component of $i$. All the $\theta$-components of $\rho$
form a partition of $V.$

We use the following example to illustrate that $\theta$-connected component may not characterize the optimal path.

\begin{rk}
Let $V=\{1,2,3\}, E=\{(1,2),(2,3)\}.$ Let $\rho^a=(0,0,1)$ and $\rho^b=(0,\frac 12,\frac 12).$  We can not obtain that $\rho$ connecting $\rho^a$ and $\rho^b$ lies on the boundary as in the deterministic case.  
To see this fact, assume that $(\rho,m)\in C(\rho^a,\rho^b)$ with $\rho_1\not\equiv 0$. We have that 
\begin{align*}
\dot \rho_1+m_{12}&=(\Sigma_1-\Sigma_2)\theta_{12}(\rho)\dot W^{\delta},\\
\dot \rho_2+m_{21}+m_{23}&=(\Sigma_2-\Sigma_1)\theta_{21}(\rho)\dot W^{\delta}+(\Sigma_2-\Sigma_3)\theta_{23}(\rho)\dot W^{\delta},\\
\dot \rho_3+m_{32}&=(\Sigma_3-\Sigma_2)\theta_{32}(\rho)\dot W^{\delta}.
\end{align*}
Then one may define $(\widetilde \rho_1,\widetilde \rho_2,\widetilde \rho_3)=(0,\rho_1+\rho_2,\rho_3)$ and 
\begin{align*}
&\widetilde m_{12}=(\Sigma_1-\Sigma_2)\theta_{12}(\rho)\dot W^{\delta},\; \widetilde m_{23}-(\Sigma_2-\Sigma_3)\dot W^{\delta}=m_{23}-(\Sigma_2-\Sigma_3)\theta_{23}(\rho)\dot W^{\delta}.
\end{align*}
Then it holds that $\widetilde \rho(0)=\rho^a, \widetilde \rho(1)=\rho^b$ and $\dot {\widetilde \rho_1}=0.$ By the definition of $\widetilde \rho,$ it could be shown that 
\begin{align*}
&\dot {\widetilde \rho_2}+\widetilde m_{23}=(\Sigma_2-\Sigma_3)\dot W^{\delta},\\
&\dot {\widetilde \rho_3}+\widetilde m_{32}=(\Sigma_3-\Sigma_2)\dot W^{\delta}.
\end{align*}
Therefore, we have 
\begin{align*}
\mathcal A(\rho,m)=\frac 12\int_0^1\Big(\frac {m_{12}^2}{\theta_{12}(\rho)}+\frac {m_{23}^2}{\theta_{23}(\rho)}\Big)dt,
\end{align*}
and 
\begin{align*}
\mathcal A(\widetilde \rho,\widetilde m)=\frac 12\int_0^1\frac {((\Sigma_1-\Sigma_2)\theta_{12}(\widetilde \rho)\dot W^{\delta})^2}{\theta_{12}(\widetilde \rho)}+(m_{23}+\frac 12\rho_1(\Sigma_2-\Sigma_3)\dot W^{\delta})^2 dt.
\end{align*}
However, we may not have $\mathcal A(\widetilde \rho,\widetilde m)\ge \mathcal A(\rho,m).$
\end{rk}

In the next subsection, we construct an optimal control problem with a special stochastic perturbation such that $\theta$-connect method still holds in the stochastic case.

\subsection{Optimal control problem with a special stochastic perturbation}
Now we consider a special perturbation of optimal control problem, that is,
\begin{align}\label{ot-graph-sp}
&\inf_{\rho,v}\; [\int_0^1 \frac 12\<v_t,v_t\>_{\theta(\rho_t)} dt]\\\nonumber 
& \text{subject to:}\;\; d\rho(t)+div_G^{\theta}(\rho(t) v(t))+div_G^{\theta}(\rho(t)v(t)) dW_t^{\delta}=0\\\nonumber
& \text{and}\;\; \rho(0)=\rho_a,\; \rho(1)=\rho_b.
\end{align}
Note that \eqref{ot-graph-sp} is different from \eqref{ot-graph-wk} since the diffusion term in the constraint involves $v(t)$. In the continuous space, the constraint in the critical equation has a corresponding stochastic differential equation driven by the multiplicative noise in the particle level and thus is different from \eqref{ot-graph-wk} whose corresponding stochastic differential equation is driven by the additive noise. Another motivation is whether $\theta$-component  method can characterize 
the property that the optimal transfer touches the boundary as in the deterministic case.

Given $\rho^a,\rho^b\in \mathcal P(G)$, we define the feasible set $C_F(\rho^a,\rho^b)$ of pairs $(\rho,m)$ such that 
\begin{align*}
\rho\in H^{1}([0,1];\mathcal P(G)), m\in L^2([0,1];\mathcal S^{n\times n}), \; (\rho(0),\rho(1))=(\rho^a,\rho^b)
\end{align*}
and 
\begin{align*}
d\rho_i(t)+\sum_{j\in N(i)}m_{ij}dt+\sum_{j\in N(i)}m_{ij}dW^{\delta}(t) =0.
\end{align*}
We consider the equivalent form of \eqref{ot-graph-sp} 
$\inf_{\rho,m} \mathcal A(\rho,m)$ over the set $C_F(\rho^a,\rho^b)$, where $\mathcal A$ is defined in \eqref{equ-ot-graph}.

\begin{prop}\label{non-empty-wk1}
For any $\rho^a,\rho^b \in \mathcal P(G),$ there is a path $(\rho,m)\in S_F(\rho^a,\rho^b)$ such that $\mathcal A(\rho,m)<\infty.$
\end{prop}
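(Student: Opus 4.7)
The plan is to adapt the proof of Proposition \ref{non-empty-wk} to the new constraint. The key observation is that the feasibility condition now reads
\[
\dot \rho_i(t) = -(1 + \dot W^{\delta}(t)) \sum_{j \in N(i)} m_{ij}(t),
\]
so on each sub-interval $[t_k, t_{k+1})$ the factor $\alpha_k := 1 + (W(t_{k+1})-W(t_k))/\delta$ is a constant that is nonzero almost surely (a finite collection of nondegenerate Gaussians avoids any fixed value). Hence a.s. $\min_k |\alpha_k| > 0$, and on each sub-interval the constraint can be inverted by solving the deterministic divergence problem $\sum_{j\in N(i)} m_{ij}(t) = -\dot\rho_i(t)/\alpha_k$, which is solvable on a connected graph since $\sum_i \dot\rho_i(t)=0$.

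I would then follow the induction on the number of nodes plus the concatenation argument used in Proposition \ref{non-empty-wk}. For the base case $N=2$, the identity $\rho_1 + \rho_2 \equiv 1$ forces $\theta_{12}(\rho) \equiv 1/2$, so positivity of $\theta$ is automatic and no support issue arises. Take $\rho_1(t) = \rho_1^a$ on $[0, 1-\delta]$ with $m_{12}\equiv 0$ (feasible since $\dot\rho_1 \equiv 0$), and let $\rho_1$ interpolate linearly from $\rho_1^a$ to $\rho_1^b$ on $[1-\delta, 1]$ with $m_{12}(t) = -(\rho_1^b - \rho_1^a)/(\delta\,\alpha_{K-1})$. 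This gives
\[
\mathcal A(\rho, m) \;=\; \frac{1}{2}\int_{1-\delta}^1 \frac{(\rho_1^b - \rho_1^a)^2}{\delta^2 \alpha_{K-1}^2}\, dt \;=\; \frac{(\rho_1^b - \rho_1^a)^2}{2\delta\, \alpha_{K-1}^2} \;<\; \infty \quad \text{a.s.}
\]

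For the inductive step with $N > 2$, the concatenation arguments of the original proof transfer directly: it suffices to exhibit, for an arbitrary $\rho^a$ with support of cardinality $k$, a feasible path from $\rho^a$ to a density $\widetilde\rho$ with support of cardinality $k-1$; iterating reduces the problem to a two-node subgraph after finitely many steps, and time-rescaling yields a feasible path on $[0,1]$. On each stage $[t_{l-1}, t_l]$ one chooses $\rho$ supported on an active subgraph and applies the piecewise divergence inversion above, restricted to edges $(i,j)$ with $\theta_{ij}(\rho) > 0$, so that $m_{ij}(t) = 0$ whenever $\theta_{ij}(\rho(t)) = 0$ and $L(\theta_{ij}(\rho), m_{ij})$ remains integrable.

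The main obstacle is coordinating the deterministic support-reduction construction of $\rho(t)$ with the piecewise divergence inversion so that the resulting $m$ stays supported on the active edges of $\rho(t)$ at every time, while simultaneously absorbing the random scaling $1/\alpha_k^2$ into the final estimate. Since only finitely many $\alpha_k$ appear and $\min_k|\alpha_k|>0$ a.s., the extra stochastic factor is an a.s.\ finite constant depending on $\omega$ and $\delta$, which preserves finiteness of $\mathcal A(\rho,m)$ almost surely and completes the argument.
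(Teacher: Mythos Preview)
Your proposal is correct and follows essentially the same approach as the paper's proof: induction on $N$, the same base-case construction with $\rho_1$ constant on $[0,1-\delta]$ and linear on $[1-\delta,1]$, the key observation that $1+\dot W^{\delta}$ is a.s.\ nonzero on each sub-interval so that $m$ can be recovered, and the concatenation/support-reduction argument for $N>2$. You supply a bit more detail on the divergence inversion and the support considerations, but the strategy is the same.
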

\begin{proof}

The proof is similar to that of Proposition \ref{non-empty-wk}.
We use an introduction argument on the nodes number of $G$.  First, consider the case that the cardinality of $V=\{1,2\}$ is 2, the edge $E=\{(1,2),(2,1)\}$ and $\rho^a\neq \rho^b.$
Define $\rho_1(t)=\rho_1^a, t\in [0,1-\delta], \rho_1(t)=\rho_1^a+(\rho_1^b-\rho_1^a) \frac {t-1+\delta}{\delta}, t\in [1-\delta, 1].$
Then it follows that 
\begin{align*}
\rho_1(t)-\rho_1(0)=\int_0^t m_{21}(s)(1+\dot{W^{\delta}}(s)) ds.
\end{align*}
Therefore, we get  
\begin{align*}
m_{21}(t)&=0,\; t\in [0,1-\delta],\\
m_{21}(t)(1+\dot W^{\delta}(t))&=(\rho_1^b-\rho_1^a) \frac {1}{\delta},\; t\in [1-\delta, 1], \;\mathcal L^1 \; \text{a.e.}, 
\end{align*}
where $\dot W^{\delta}(t)=\frac {W(t_{k+1})-W(t_k)}{\delta}, t_k=k\delta, k\le K-1, K\delta=1, t\in[t_k,t_{k+1}].$
Notice that 
\begin{align*}
&\int_0^1m_{21}^2(s)ds\\
&=\int_{1-\delta}^1\frac 1{\delta^2}\frac {(\rho_1^b-\rho_1^a)^2}{(1+\dot{W^{\delta}}(s))^2}ds\\
&\le \int_{1-\delta}^1\frac {(\rho_1^b-\rho_1^a)^2}{(\delta+{W_{t_{K}}-W_{t_{K-1}}})^2 } ds
\\
&\le C(\delta)<\infty,\; \text{a.s.}
\end{align*}
This covers the case $n=2$.
When $n>2,$ 
we use the concatenation arguments to show the finiteness of 
$\mathcal A$ as in the proof of Proposition \ref{ot-graph-sp}. 
\end{proof} 

Applying the Lagrange multiplier method,   
it can be seen that  the critical point $(\rho,v)$ of \eqref{ot-graph-sp} will satisfy
\begin{align*}
&\theta_{ij}(\rho)[v_{ij}-(\lambda_i-\lambda_j)(1+\dot W^{\delta})]=0,\;\forall (i,j)\in E,\\
&\<\dot \lambda,\rho\>+\frac 14 \sum_{ij} v_{ij}^2\theta_{ij}(\rho)+\frac 12\sum_{ij}v_{ij}(\lambda_j-\lambda_i)\theta_{ij}(\rho)\\
&\quad +\frac 12\sum_{ij}v_{ij}(\lambda_j-\lambda_i)\theta_{ij}(\rho)dW^{\delta}(t)=0, \; \mathcal L^1 \; \text{a.e.}
\end{align*}
Denote $S_i=-\lambda_i$. When the optimal path does not intersect the boundary of $\mathcal P(G),$
the above equations become the stochastic Wasserstein Hamiltonian flow (see \cite{CLZ21s}), 
\begin{align}\label{swhf-wk1}
\dot \rho=\nabla_{S} \mathcal H(\rho,S)(1+\dot W^{\delta})^2,\;\\\nonumber
\dot S=-\nabla_{\rho} \mathcal H(\rho,S)(1+\dot W^{\delta})^2,
\end{align}
where $\mathcal H(\rho,S)=\frac 14\sum_{ij\in E} (S_i-S_j)^2\theta_{ij}(\rho)$.
Now we present the existence of the minimizer of \eqref{ot-graph-sp}  whose proof is similar to that of Theorem \ref{tm-ver1} and thus is omitted. 

\begin{tm}
Let $\rho^a,\rho^b\in \mathcal P(G).$ There exists $(\rho^*,v^*,m^*)$ such that 
$(\rho^*,v^*)$ minimizes \eqref{ot-graph-sp} and $(\rho^*,m^*)$ minimizes $\inf_{\rho,m} \mathcal A(\rho,m).$
\end{tm}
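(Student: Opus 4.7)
The plan is to closely mirror the argument of Theorem \ref{tm-ver1}, modified to accommodate the multiplicative structure of the noise term $div_G^\theta(\rho v)\,dW_t^\delta$ appearing in the constraint. Throughout, fix a realization $\omega\in\Omega$ for which $\dot W^\delta(\cdot,\omega)$ is piecewise constant and bounded on $[0,1]$, so that the pathwise state equation $\dot\rho_i + \sum_{j\in N(i)} m_{ij}(1+\dot W^\delta) = 0$ has bounded (albeit random) deterministic coefficients once $\omega$ is frozen.

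First, I would extract a minimizing sequence. By Proposition \ref{non-empty-wk1} the set $C_F(\rho^a,\rho^b)$ is nonempty and $\inf_{(\rho,m)\in C_F(\rho^a,\rho^b)}\mathcal A(\rho,m)\le C<\infty$, so one may select $\{(\rho^n,m^n)\}\subset C_F(\rho^a,\rho^b)$ with $\mathcal A(\rho^n,m^n)\le C+1$. Since $\rho_i^n\in[0,1]$, the bound $\theta_{ij}(\rho^n)\le\max(\rho^n_i,\rho^n_j)\le 1$ together with the definition of $L$ in \eqref{equ-ot-graph} gives $\|m^n\|_{L^2([0,1];\mathcal S^{N\times N})}^2\le 4(C+1)$. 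Substituting into the constraint and exploiting the pathwise boundedness of $|1+\dot W^\delta|$ yields $\|\dot\rho^n\|_{L^2}\le C'$, which, combined with $\rho^n(0)=\rho^a$ and the Poincar\'e--Wirtinger inequality, supplies a uniform $H^1$ bound on $\rho^n$.

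The next step is compactness and weak lower semi-continuity. After passing to a subsequence, $(\rho^n,m^n)\rightharpoonup(\rho^*,m^*)$ weakly in $H^1([0,1];\mathbb R^N)\times L^2([0,1];\mathcal S^{N\times N})$; the compact embedding $H^1\hookrightarrow C^0$ preserves the endpoint conditions $\rho^*(0)=\rho^a$, $\rho^*(1)=\rho^b$, and since the constraint is affine in $(\rho,m)$ with the pathwise bounded multiplier $(1+\dot W^\delta)$ it passes to the weak limit, so $(\rho^*,m^*)\in C_F(\rho^a,\rho^b)$. Joint convexity of $(x,y)\mapsto y^2/x$ on $[0,\infty)\times\mathbb R$ together with the concavity of $\theta$ on $[0,\infty)\times[0,\infty)$, as exploited in \cite{GLM19}, gives the weak lower semi-continuity of $\mathcal A$, and hence $\mathcal A(\rho^*,m^*)=\inf_{(\rho,m)\in C_F(\rho^a,\rho^b)}\mathcal A(\rho,m)$.

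Finally, I would transfer this minimizer back to the original formulation. Define $v^*_{ij}(t):=m^*_{ij}(t)/\theta_{ij}(\rho^*(t))$ whenever $\theta_{ij}(\rho^*(t))>0$ and $v^*_{ij}(t):=0$ otherwise; then $\tfrac12\int_0^1\<v^*,v^*\>_{\theta(\rho^*)}\,dt=\mathcal A(\rho^*,m^*)<\infty$, and $(\rho^*,v^*)$ is admissible for \eqref{ot-graph-sp}. Conversely, any admissible $(\rho,v)$ for \eqref{ot-graph-sp} produces $(\rho,\theta(\rho)v)\in C_F(\rho^a,\rho^b)$ with identical cost, so the minimality of $(\rho^*,m^*)$ transfers to the minimality of $(\rho^*,v^*)$. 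The main technical issue is ensuring that the multiplicative coefficient $(1+\dot W^\delta)$ does not disrupt weak closure of the constraint set; since $\dot W^\delta$ is piecewise constant with discontinuities only on the null set $\{t_k=k\delta\}$, it acts as an $L^\infty$ multiplier, so the passage to the weak limit introduces no difficulty beyond the deterministic case handled in \cite{GLM19} and Theorem \ref{tm-ver1}, which is why the authors omit the detailed argument.
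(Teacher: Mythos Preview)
Your proposal is correct and follows essentially the same approach as the paper, which explicitly omits the proof as being ``similar to that of Theorem \ref{tm-ver1}.'' You have correctly identified the only genuine modification needed, namely that the multiplicative coefficient $(1+\dot W^\delta)$ is a pathwise $L^\infty$ function which therefore neither obstructs the $H^1$ bound on $\rho^n$ nor the weak closure of the (now affine) constraint set.
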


By similar steps in proving Theorem \ref{tm-dual}, we could obtain the following duality property
\begin{align}\label{dual1}
\min_{(\rho,m)\in C_F(\rho^a,\rho^b)} \mathcal A(\rho,m)&=
\sup_{S}\Big\{\<S(1),\rho^b\>-\<S(0),\rho^a\>: 
 \sup_{\rho}\{\<\dot S,\rho\> \\\nonumber
 &+ \frac 14\sum_{ij}(S_{i}-S_{j})^2\theta_{ij}(\rho)(1+\dot{W^{\delta}}(t))^2\}=0\Big\}.
\end{align} 
The key step is  proving the minimax identity 
\begin{align}\label{minmax11}
 \inf_{(\rho,m)}\sup_{S\in H^1}\mathcal L(\rho,m,S)=\sup_{S \in H^1}\inf_{(\rho,m)}\mathcal L(\rho,m,S)
 \end{align}
of
\begin{align*}
\mathcal L(\rho,m,S)&:=\<S(1),\rho^b\>-\<S(0),\rho^a\>+\mathcal A(\rho,m)
\\
&\quad-\int_0^1(\<\dot S,\rho\>+\<m,\nabla_G S\>(1+\dot W^{\delta}(t)))dt,
\end{align*} 
which is analogous to that of Lemma \ref{lm-minmax1}.
As a consequence, the arguments in the proof of Theorem \ref{tm-dual} lead to the following theorem.

\begin{tm}\label{tm-dual1}
The dual property \eqref{dual1} holds.
\end{tm}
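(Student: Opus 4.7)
The plan is to mirror the proof of Theorem \ref{tm-dual} almost line by line, with the modification that the coupling between $m$ and $S$ is now weighted by the random factor $(1+\dot W^\delta(t))$ rather than by a linear correction involving the fixed $\Sigma$. First I would record the Lagrangian
\begin{align*}
\mathcal L(\rho,m,S)&=\<S(1),\rho^b\>-\<S(0),\rho^a\>+\mathcal A(\rho,m)\\
&\quad-\int_0^1\bigl(\<\dot S,\rho\>+\<m,\nabla_G S\>(1+\dot W^{\delta}(t))\bigr)dt
\end{align*}
and observe the two one-sided partial sups/infs. On the one hand, for a fixed feasible pair $(\rho,m)$, the linear (in $S$) functional attains the value $0$ precisely when $(\rho,m)\in C_F(\rho^a,\rho^b)$ and $+\infty$ otherwise, so
\[
\sup_{S\in H^1}\mathcal L(\rho,m,S)=\mathcal A(\rho,m)+\mathbb I_{C_F(\rho^a,\rho^b)}(\rho,m),
\]
exactly as in Theorem \ref{tm-dual}. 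Taking $\inf$ in $(\rho,m)$ then gives the left-hand side of \eqref{dual1}.

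Next I would compute $\inf_{(\rho,m)}\mathcal L(\rho,m,S)$ for fixed $S$. The minimization in $m$ is pointwise-quadratic with linear forcing weighted by $(1+\dot W^\delta)$; completing the square on each edge yields the optimal choice $m_{ij}=\theta_{ij}(\rho)(\nabla_G S)_{ij}(1+\dot W^\delta)$ whenever $\theta_{ij}(\rho)>0$, and the remaining minimization in $\rho$ produces
\[
\inf_{(\rho,m)}\mathcal L(\rho,m,S)=\<S(1),\rho^b\>-\<S(0),\rho^a\>-\int_0^1\max\bigl(H(\dot S,\nabla_G S),0\bigr)dt,
\]
with
\[
H(\dot S,\nabla_G S):=\sup_{\rho\in\mathcal P(G)}\Bigl\{\<\dot S,\rho\>+\tfrac14\sum_{ij}(S_i-S_j)^2\theta_{ij}(\rho)(1+\dot W^\delta(t))^2\Bigr\}.
\]
Thus the constraint appearing on the right-hand side of \eqref{dual1} is exactly $H(\dot S,\nabla_G S)\le 0$, $\mathcal L^1$-a.e.

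The key bridge is the minimax identity \eqref{minmax11}, which I would establish along the lines of Lemma \ref{lm-minmax1}: restrict $S$ to the weakly compact convex ball $H^1_R$, apply the standard minimax theorem using weak lower semicontinuity and convexity of $\mathcal A$ together with linearity of the remaining terms in $S$, obtain minimizers $(\rho^{*,R},m^{*,R})$ of $\mathcal A+R\mathcal E$ where $\mathcal E$ is the analogous feasibility penalty, and let $R\to\infty$ using the precompactness of $(\rho^{*,R},m^{*,R})$ together with Proposition \ref{non-empty-wk1} and the existence of minimizers to pin down $\limsup_{R\to\infty}R\mathcal E(\rho^{*,R},m^{*,R})=0$. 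Finally, given any feasible $S$, I would upgrade the constraint $H\le 0$ to an equality by the shift $\bar S_i(t)=S_i(t)+\alpha(t)$ with $\dot\alpha=-\chi_{\{H<0\}}H(\dot S,\nabla_G S)$; since $\sum_i\rho_i=1$, the shift adds $\dot\alpha$ to $H$ on the set $\{H<0\}$ and strictly increases $\<S(1),\rho^b\>-\<S(0),\rho^a\>$ by $-\int \chi_{\{H<0\}}H\,dt\ge 0$, because $\rho^a,\rho^b$ are probability measures.

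The main technical obstacle is the presence of the random multiplier $(1+\dot W^\delta(t))^2$ inside $H$. Although $\dot W^\delta$ is piecewise constant and hence a.s.\ bounded on $[0,1]$, it is not sign-definite, so on intervals where $1+\dot W^\delta$ is very small the quadratic contribution in $H$ essentially degenerates and the effective coupling between $\dot S$ and $\nabla_G S$ is weakened. This affects neither the shift trick (which only uses $\sum_i\rho_i=1$) nor the minimax theorem (which only needs convexity/compactness in $S$), but it does require care in the Hölder-type estimate that identifies the infimum in $(\rho,m)$: I would perform the completing-the-square argument pathwise in $\omega$, on the finitely many intervals of constancy of $\dot W^\delta$, and handle the null set on which $1+\dot W^\delta=0$ by adopting the convention $L(x,y)=\infty$ away from $\{x>0\}$ built into $\mathcal A$, so that the corresponding values of $m$ are forced to be zero and the quadratic minimization is unambiguous.
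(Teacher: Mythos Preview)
Your proposal is correct and follows precisely the approach the paper intends: the paper's own proof of Theorem~\ref{tm-dual1} is simply the remark that the argument of Theorem~\ref{tm-dual} carries over verbatim, and you have faithfully reproduced that argument with the correct replacement of the $\Sigma$-coupling by the multiplicative factor $(1+\dot W^\delta)$. Your additional discussion of the degeneracy when $1+\dot W^\delta$ is small and the pathwise completing-the-square argument on the intervals of constancy is more detail than the paper itself provides, but it is consistent with (and a reasonable elaboration of) the omitted steps.
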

 
We omitted the tedious details here, since it follows the same approach as the one given the proof of Theorem \ref{tm-dual}.

Define $\dot S=\dot S^{\textrm{sing}}+\dot S^{\textrm{\textrm{abs}}}$ where $\dot S^{\textrm{abs}}$ is the absolutely continuous part of $\dot S$ and $\dot S^{\textrm{sing}}$ is the singular part of $\dot S$, then we have that 
\begin{align*}
&d\rho(t)+div_G^{\theta}(\rho(t) \nabla_G S(t))(dt+dW_t^{\delta})=0,\\
&\<\dot S^{\textrm{abs}},\rho\>+\frac 14\sum_{ij}(S_i-S_j)^2\theta_{ij}(\rho)(1+\dot{ W_t^{\delta}})^2=0,\; \mathcal L^1 \; \text{a.e.} \\
&\<\frac {d\dot S^{\textrm{\textrm{sing}}}}{d\mu},\rho\>=0, \; \forall \mu\; \text{a.e.}, \; \mu\; \bot \;\mathcal L_1. 
\end{align*}
When the optimal path does not intersect the boundary of $\mathcal P(G),$ we recover Eq. \eqref{swhf-wk1}.

We would like to point out that  in this particular case, we can use the $\theta$-connected components to study whether the optimal transfer achieves the boundary of the density manifold in optimal transport on graph.  We use the following example to illustrate the reason.

\begin{rk}
Let $V=\{1,2,3\}, E=\{(1,2),(2,3)\}.$ Let $\rho^a=(0,0,1)$ and $\rho^b=(0,\frac 12,\frac 12).$  We claim that $\rho$ connecting $\rho^a$ and $\rho^b$ lies on the boundary as in the deterministic case.  
To see this fact, assume that $(\rho,m)\in C_F(\rho^a,\rho^b)$ with $\rho_1\not\equiv 0$. We have that 
\begin{align*}
\dot \rho_1+m_{12}(1+\dot W^{\delta})&=0,\\
\dot \rho_2+(m_{21}+m_{23} )(1+\dot W^{\delta})&=0,\\
\dot \rho_3+m_{32}(1+\dot W^{\delta})&=0.
\end{align*}
Then one may define $(\widetilde \rho_1,\widetilde \rho_2,\widetilde \rho_3)=(0,\rho_1+\rho_2,\rho_3)$ and 
$
\widetilde m_{12}=0,\; \widetilde m_{23}=m_{23}.
$
Then it holds that $\widetilde \rho(0)=\rho^a, \widetilde \rho(1)=\rho^b$ and $\dot {\widetilde \rho_1}=0.$ By the definition of $\widetilde \rho,$ it could be shown that 
\begin{align*}
&\dot {\widetilde \rho_2}+\widetilde m_{23}(1+\dot W^{\delta})=0,\\
&\dot {\widetilde \rho_3}+\widetilde m_{32}(1+\dot W^{\delta})=0.
\end{align*}
Therefore, we have 
\begin{align*}
\mathcal A(\rho,m)=\frac 12\int_0^1\Big(\frac {m_{12}^2}{\theta_{12}(\rho)}+\frac {m_{23}^2}{\theta_{23}(\rho)}\Big)dt,
\end{align*}
and 
\begin{align*}
\mathcal A(\widetilde \rho,\widetilde m)=\frac 12\int_0^1\frac {m_{23}^2}{\theta_{23}(\widetilde \rho)} dt=\frac 12\int_0^1 m_{23}^2 dt.
\end{align*}
We have $\mathcal A(\widetilde \rho,\widetilde m)<\mathcal A(\rho,m),$ which leads to a contradiction.
\end{rk}

Next we show the relationship between \eqref{ot-graph-sp} with a small perturbation  $\epsilon \dot W^{\delta}$ and the classical optimal transport problem. 
By defining $\hat v=v(1+\epsilon \dot W^{\delta}),$ then \eqref{ot-graph-sp} can be rewritten as 
\begin{align}\label{ot-graph-sp-lim}
&\inf_{\rho,\widehat v}\; [\int_0^1 \frac 12 \frac 1{(1+\epsilon \dot W^{\delta})^2}\<\widehat v_t,\widehat v_t\>_{\theta(\rho_t)} dt]\\\nonumber 
& \text{subject to:}\;\; d\rho(t)+div_G^{\theta}(\rho(t) \widehat v(t))=0\\\nonumber
& \text{and}\;\; \rho(0)=\rho_a,\; \rho(1)=\rho_b.
\end{align}

We show the $\Gamma$-convergence of
 $$\mathcal A^{\epsilon_n}(\rho,m):=\int_0^1 \frac 1{(1+\epsilon_n \dot W^{\delta})^2} \sum_{ij} \frac {m_{ij}^2}{\theta^{ij}(\rho)}ds, \; \epsilon_n\to0.$$
For a given $(\rho,m)\in C_F(\rho^a,\rho^b)$ and for a sequence $(\rho^{\epsilon_n},m^{\epsilon_n})\in C_F(\rho^a,\rho^b)$ converging to $(\rho,m),$ we have that 
\begin{align*}
\liminf_{n\to \infty}\mathcal A^{\epsilon_n}(\rho^n,m^n)\ge \liminf_{n\to \infty}\int_0^1 \frac 1{(1+\epsilon_n |\dot W^{\delta}|)^2} \sum_{ij} \frac {m_{ij}^2}{\theta^{ij}(\rho)}ds \ge \mathcal A(\rho,m).
\end{align*}
By the dominated convergence theorem, it follows that
\begin{align*}
&\lim_{\epsilon \to 0} \inf_{\rho,\widehat v}[\int_0^1 \frac 12 \frac 1{(1+\epsilon \dot W^{\delta})^2}\<\widehat v_t,\widehat v_t\>_{\theta(\rho_t)} dt]\\
 & \le \inf_{\rho,\widehat v}\limsup_{\epsilon \to 0}\; [\int_0^1 \frac 12 \frac 1{(1+\epsilon \dot W^{\delta})^2}\<\widehat v_t,\widehat v_t\>_{\theta(\rho_t)} dt]\\
&=\inf_{\rho, v}[\int_0^1 \frac 12 \< v_t,v_t\>_{\theta(\rho_t)} dt].
\end{align*} 
Combining the above estimates, we have that the limit of optimal control with common noise \eqref{ot-graph-sp-lim} is the classical optimal control a.s.

\section{Conclusions}
In this paper, using the notion of common noise, we establish the initial value and two-point boundary value problems of stochastic Wasserstein Hamiltonian flows on finite graph. We show the local well-posedness of the initial value problem always holds, up to a positive time, for stochastic Wasserstein Hamiltonian flow and provide a sufficient condition of its global well-posedness. For the boundary value problem, by exploiting the Wong--Zakai approximation, we obtain the existence of the minimizer of optimal control problem perturbed by common noise and derive its dual formula. However, many questions remain to be answered. For example, how to show the existence of the minimizer of optimal control problem driven by the other Wiener process (not common noise) ? Does the minimizer exist for the general variational principle with common noise? 
When considering the lattice graphs, can we get some characterizations of the minimizer for the continuous problem if the mesh size is reduced to zero? Although our focus is on using common noise in this paper, we hope the results may shed light on the investigation of Wasserstein Hamiltonian flow with other types of noise too.

\bibliographystyle{plain}
\bibliography{references}

\end{document}